\newcommand{\brk}[1]{{\left\langle{#1}\right\rangle}}
\newcommand{\ve}{\varepsilon}
\newcommand{\Nr}{{\mathsf N}}
\newcommand{\e}{{\operatorname{e}}}
\newcommand{\slt}{{\mathfrak{sl}(2)}}
\newcommand{\Ubar}{{\wb U_q^{H}\slt}}
\newcommand{\cat}{\mathscr{C}}
\newcommand{\Id}{\operatorname{Id}}
\newcommand{\bp}[1]{{\left(#1\right)}}
\newcommand{\qn}[1]{{\left\{#1\right\}}}
\newcommand{\qd}{{\mathsf d}}
\newcommand{\Hom}{\operatorname{Hom}}
\newcommand{\Spin}{{\operatorname{\mathsf Spin}}}
\newcommand{\SO}{{\operatorname{\mathsf SO}}}
\newcommand{\C}{\ensuremath{\mathbb{C}} }
\newcommand{\Z}{\ensuremath{\mathbb{Z}} }
\newcommand{\R}{\ensuremath{\mathbb{R}} }
\newcommand{\N}{\ensuremath{\mathbb{N}} }
\newcommand{\wt}{\widetilde}
\newcommand{\wb}{\overline}
\newcommand{\ms}[1]{\mbox{\tiny$#1$}}
\newcommand{\simto}{{\stackrel\sim\to}}
\newcommand{\Cp}{{\ddot\C}}
\newcommand{\spin}{\ensuremath{\C/2\Z\text{-spin}}}
\newcommand{\spinst}{{$\spin$ structure}}
\newcommand{\epsh}[2]
         {\begin{array}{c} \hspace{-1.3mm}
        \raisebox{-4pt}{\epsfig{figure=#1,height=#2}}
        \hspace{-1.9mm}\end{array}}
\newtheorem{theo}{Theorem}[section]
\newtheorem{prop}[theo]{Proposition}
\newtheorem{cor}[theo]{Corollary}
\theoremstyle{definition}
\newtheorem{defi}[theo]{Definition}
\newtheorem{rem}[theo]{Remark}
\newtheorem{exo}[theo]{Exercise}
\theoremstyle{remark}
\renewcommand{\qedsymbol}{\fbox{\thetheo}}
\newcounter{exo} \newcounter{numexercice}
\renewcommand{\theexo}{\arabic{exo}}
\begin{document}
\title[Non semi-simple  $\slt$ quantum invariants, spin case]
{Non semi-simple  $\slt$ quantum invariants, spin case} 
\author[Blanchet]{Christian Blanchet}
\address{Univ Paris Diderot, Sorbonne Paris Cit\'e, IMJ-PRG, UMR 7586 CNRS, Sorbonne Universit\'e, UPMC Univ Paris 06, F-75013, Paris, France} 
\email{blanchet@imj-prg.fr}

\author[F. Costantino]{Francesco Costantino}
\address{Institut de Recherche Math\'ematique Avanc\'ee\\
  Rue Ren\'e Descartes 7\\
 67084 Strasbourg, France}
\email{costanti@math.unistra.fr}

\author[N. Geer]{Nathan Geer}
\address{Mathematics \& Statistics\\
  Utah State University \\
  Logan, Utah 84322, USA}
\email{nathan.geer@gmail.com}

\author[B. Patureau-Mirand]{Bertrand Patureau-Mirand}
\address{Univ. Bretagne - Sud,  UMR 6205, LMBA, F-56000 Vannes, France}
\email{bertrand.patureau@univ-ubs.fr}

\begin{abstract}
  Invariants of 3-manifolds from a non semi-simple category of modules over a
  version of quantum $\slt$ were obtained by  the last three authors
  in \cite{CGP1}.  In their construction the quantum parameter $q$ is a root
  of unity of order $2r$ where $r>1$ is odd or congruent to $2$ modulo $4$.
  In this paper we consider the remaining cases where $r$ is congruent to zero
  modulo $4$ and produce invariants of $3$-manifolds with colored links,
  equipped with generalized spin structure. For a given $3$-manifold $M$, the
  relevant generalized spin structures are (non canonically) parametrized by
  $H^1(M;\C/2\Z)$. \end{abstract}

\maketitle
\setcounter{tocdepth}{3}

\section*{Introduction}
New quantum invariants of $3$-manifolds equipped with $1$-dimensional
cohomology class over $\C/2\Z$ or equivalently $\C^*$ flat connection have
been constructed in \cite{CGP1} from a variant of quantum $\slt$.  This
family of invariants is indexed by integers $r\geq 2$, $r\not\equiv 0$ mod
$4$, which give the order of the quantum parameter.  The relevant
representation category is non semi-simple, so that the usual modular category
framework does not apply and is replaced by more general {\em relative
  $G$-modular category}. The required non degeneracy condition is not
satisfied in cases $r\equiv 0$ mod $4$. In the present paper we show that the
procedure can be adapted to the remaining cases and leads to invariants of
$3$-manifolds with colored links, equipped with some generalized spin
structure. These spin structures can be defined as certain cohomology classes
on the tangent framed bundle and can be interpreted as $C^*$ flat connections
on this framed bundle.

The non semi-simple $\slt$ invariants from \cite{CGP1} have been extended to
TQFTs in \cite{BCGP}. For $r=2$, they give a TQFT for a canonical
normalization of Reidemeister torsion; in particular they recover
classification of lens spaces. In general they give new representations of
Mapping Class Groups with opened faithfulness question.  They contain the
Kashaev invariants and give an extended formulation of the volume conjecture.
Similar TQFTs in the spin case would be interesting to construct and study.

Similarly to the cohomological case studied in \cite{CGP1}, we define in this
paper a secondary invariant for empty manifolds with integral structure (here
integral means a natural $\Spin$-structure on $M$).  These secondary
invariants might be related to the $\Spin$-refinement of the
Witten-Reshetikhin-Turaev invariants, defined in \cite{KM,Tu1,Bl1} as it is the case for the cohomological case (see \cite{CGP2}).  Also for $r=4$, the
$\Spin$-refinement of the Witten-Reshetikhin-Turaev invariant is equivalent to
the Rokhlin invariant.  It would be interesting to find a geometric
interpretation of the invariants of this paper for $r=4$.

\section{Decorated $\C/2\Z$-spin manifolds}
\subsection{$\C/2\Z$-spin manifolds}
For a quick overview on classical spin structures, see \cite{Mi}.
Quantum invariants for $3$-manifolds with spin structures have been
obtained in \cite{Bl1,KM,Tu1} and extended to TQFTs in \cite{BM}.  In
\cite{Bl2,Bl3}, following a decomposition formula given in \cite{MuH},
the first author considered generalized spin structures with
coefficients in $\Z/d$, $d$ even, and studied refined quantum
invariants of the $A$ series involving those structures. Similar
refinements for general modular categories are developed in
\cite{BBC}.  In this paper we will use generalized spin structures
whose coefficient group is $\C/2\Z$ with discrete topology.
\begin{defi}
  For $n\geq 2$, the Lie group $\Spin(n,\C/2\Z)$ is defined by
  $$\Spin(n,\C/2\Z)=\frac{(\C/2\Z)_{\text{discrete}}\times \Spin(n)}{(\overline 1,-1)}\ .$$
\end{defi}
This group is a non trivial regular cover of $\SO(n)$ with Galois
group $\C/2\Z$. If we replace $\C/2\Z$ by the discrete circle
$S^1_{\text{discrete}}$ then we would get a group  $\Spin(n,S^1)$ isomorphic to $\Spin^c(n)$, but
with stronger Lie structure, i.e. the identity
$\Spin(n,S^1)\rightarrow \Spin^c(n)$ is a smooth bijection which is
not a diffeomorphism.

 Let $n\geq 2$, and $P$ be a $\SO(n)$
principal bundle over $B$. A spin structure with coefficients in
$\C/2\Z$ on $P$ is a $\C/2\Z$ regular cover of $P$ whose restriction
to a fiber is equivalent to $\Spin(n,\C/2\Z)$ over $\SO(n)$, up to
isomorphism. 
A $\C/2\Z$ regular cover over $P$ is classified by a cohomology class
$\sigma\in H^1(P;\C/2\Z)$. 
For $n\geq 3$, the condition on the fiber says that the restriction to the fiber
\mbox{$i^*(\sigma)\in H^1(\SO(n);\C/2\Z)=\Z/2$} is non zero. For $n=2$ the condition on the fiber
says that  $i^*(\sigma)=\overline 1\in H^1(\SO(2);\C/2\Z)=
\C/2\Z$.  We get the alternative definition below.
\begin{defi}[Cohomological definition]
  Let $n\geq 2$, and $P$ be a $\SO(n)$ principal bundle over $B$. A
  \spinst\ on $P$ is a class $\sigma\in H^1(P;\C/2\Z)$
  whose restriction to the fiber has order $2$.
\end{defi}
The obstruction and parametrization problem for \spinst s is encoded
in the following exact sequence:
$$0\rightarrow H^1(B;\C/2\Z)\rightarrow H^1(P;\C/2\Z)\rightarrow 
H^1(\SO(n);\C/2\Z)\rightarrow H^2(B;\C/2\Z).$$

The obstruction is the second Stiefel-Whitney class $w_2(P)\in
H^2(B;\C/2\Z)$, and if it is non-empty, the set of \spinst s supports an
affine action of $H^1(B;\C/2\Z)$.

The definition of \spinst s applies to oriented manifolds using the
oriented framed bundle. Here the choice of the Riemannian structure is
irrelevant.
\begin{defi}
  A $\spin$ manifold is an oriented manifold together with a
  \spinst\ on its oriented framed bundle.
\end{defi}
We will denote by ${\Spin}(M,\C/2\Z)$ the set of \spinst s
on the oriented manifold $M$.
\begin{rem}
  Corresponding to the inclusion map
  \mbox{$\SO(n)=\SO(\R^n)\rightarrow \SO(1+n)=\SO(\R\oplus \R^n)$} we
  get a stabilization map $P\rightarrow P\times_{\SO(n)}
  \SO(n+1)$. The associated restriction map on the set of \spinst s is
  a bijection. This is used to define a boundary map
$$\partial : {\Spin}(M,\C/2\Z)\rightarrow {\Spin}(\partial M,\C/2\Z)\ .$$
\end{rem}
If we have a $\spin$-structure $\sigma$ which is only defined on a
submanifold $N\subset M$, then the extending problem gives a relative obstruction
$w_2(M,\sigma)\in H^2(M,N;\C/2\Z)$.
\begin{defi}
  Let $M$ be an oriented manifold, $N\subset M$, and 
  $w\in H^2(M,N;\C/2\Z)$. A $\C/2\Z$-spin structure $\sigma$ on $N$ is
  said to be complementary to $w$ if and only if $w_2(M,\sigma)=w$.
\end{defi}

If $H^0(M,N)$ is trivial, then the set ${\Spin}(N,w;\C/2\Z)$ of
$\C/2\Z$-spin structures on $N$ complementary to $w$, if non empty,
still supports a natural affine action of $H^1(M;\C/2\Z)$.

\subsection{Surgery presentation of $\C/2\Z$-spin $3$-manifold}
Let $L=(L_1,\cdots, L_m)$ be an oriented framed link in $S^3$ with
linking matrix $B=( B_{ij} )_{i,j=1,m}$ and denote by $S^3(L)$ the
$3$--manifold obtained by surgery.  Then the relative obstruction
gives a bijection 
$$\psi_L :{\Spin}(S^3\setminus 
L,\C/2\Z)\rightarrow H^2(S^3,S^3\setminus L;\C/2\Z)\approx(\C/2\Z)^m\ .$$
Recall that a $\C/2\Z$-spin structure is a cohomology class on the
oriented framed bundle and can be evaluated on a framed circle.  A key
observation is that evaluation on a trivial simple curve framed by a
spanning disc is $\overline 1\in\C/2\Z$. From this we see that
$\sigma\in {\Spin}(S^3\setminus L)$ extends to $S^3(L)$ if and
only if $\psi_L(\sigma)=c=(c_j)_{1\leq j\leq m}$ satisfies the
characteristic equation
  
\begin{equation} \label{char1}
  Bc=(B_{jj})_{1\leq j\leq m}\ \mathrm{mod}\ \ 2\ .
\end{equation}
If moreover we have a link $K=(K_1,\dots,K_\nu)$ in $S^3\setminus L$,
and $\nu$ coefficients $w_1$, \dots ,$w_\nu$, representing an element 
$w\in H^2(S^3(L),S^3(L)\setminus K;\C/2\Z)\approx H_1(K)\approx
(\C/2\Z)^\nu$, then \mbox{$\sigma\in {\Spin}(S^3\setminus
  L,\C/2\Z)$} extends to an element of
${\Spin}(S^3(L)\setminus K,w;\C/2\Z)$ if and only if
$\psi_L(\sigma)=c=(c_j)_{1\leq j\leq m}$ satisfies the equation
\begin{equation} \label{char2} B(c+c')=(B_{jj})_{1\leq j\leq m}\ \
  \mathrm{mod}\ 2\ 
\end{equation} 
where $c'_j=\sum_{\nu=1}^kw_\nu\mathrm{lk}(L_j,K_\nu)$

Using that restriction maps for $\C/2\Z$-spin structures are
equivariant with respect to affine actions of cohomologies we get the
following proposition:
\begin{prop}\label{P:spin-comb}
  With the notation above, the map $\psi_L$ induces a bijection
  between ${\Spin}(S^3(L)\setminus K,w;\C/2\Z)$ and the solutions of
  Equation \ref{char2}.
\end{prop}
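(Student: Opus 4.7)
The plan is to view the claimed bijection as the composition of two maps: the restriction $r \colon \Spin(S^3(L)\setminus K, w; \C/2\Z) \to \Spin(S^3 \setminus L; \C/2\Z)$, which is well-defined because $K \cap L = \emptyset$ forces $S^3 \setminus L \subset S^3(L)\setminus K$, followed by the bijection $\psi_L$ introduced above. The discussion immediately preceding the proposition already identifies the image of this composition as exactly the set of solutions of Equation \ref{char2}: one direction shows that $\psi_L \circ r$ lands in the solutions, and the other extends any solution to an element of $\Spin(S^3(L)\setminus K, w; \C/2\Z)$. So what remains is the injectivity of $r$, i.e.\ the uniqueness of the extension of a $\C/2\Z$-spin structure across the surgery fillings.

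For this I would invoke equivariance. The set $\Spin(S^3(L) \setminus K, w; \C/2\Z)$, when non-empty, is a torsor under the affine action of $H^1(S^3(L); \C/2\Z)$, and $r$ is equivariant with respect to the pullback $H^1(S^3(L); \C/2\Z) \to H^1(S^3 \setminus L; \C/2\Z)$. Since the affine action is free, $r$ is injective if and only if this cohomology restriction is. From the long exact sequence of the pair $(S^3(L), S^3 \setminus L)$ it suffices to check that $H^1(S^3(L), S^3 \setminus L; \C/2\Z) = 0$. By excision the relative pair is the disjoint union of the filling solid tori relative to their boundary tori, and Poincar\'e--Lefschetz duality then identifies the group in question with $H_2(\sqcup_i D^2\times S^1; \C/2\Z) = 0$.

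Combining the injectivity of $r$ with the image computation and with the fact that $\psi_L$ is itself a bijection, we conclude that $\psi_L \circ r$ realizes the asserted bijection. The only real work is the short cohomological vanishing used to prove injectivity; the bulk of the content of the proposition has already been packaged into the characteristic equation~\ref{char2}, and the equivariance of restriction with respect to the affine $H^1$-actions (hinted at in the paragraph before the statement) is what lets us upgrade the earlier ``iff'' into a bijection.
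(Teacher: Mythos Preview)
Your approach---equivariance of the restriction map with respect to the affine $H^1$-actions, plus a short cohomological vanishing for injectivity---is exactly the paper's one-line argument, just with the details filled in.

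There is, however, one slip: the inclusion $S^3\setminus L \subset S^3(L)\setminus K$ is false, since $K$ lies inside $S^3\setminus L$ but has been removed from $S^3(L)\setminus K$. The common open set is $S^3\setminus(L\cup K)$, and the ``$\psi_L$'' in the statement should be read as the $L$-component of $\psi_{L\cup K}$ (the $K$-component is automatically $w$, by definition of \emph{complementary}). With this correction your argument survives intact: the subset of $\Spin\bigl(S^3\setminus(L\cup K);\C/2\Z\bigr)$ with $K$-obstruction equal to $w$ is still a torsor under $H^1(S^3\setminus L;\C/2\Z)$, the restriction from $\Spin(S^3(L)\setminus K,w;\C/2\Z)$ is equivariant for $H^1(S^3(L);\C/2\Z)\to H^1(S^3\setminus L;\C/2\Z)$, and your excision/duality computation $H^1(S^3(L),S^3\setminus L;\C/2\Z)=0$ gives precisely the injectivity you need.
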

We have obtained a combinatorial description of
${\Spin}(S^3(L)\setminus K,w;\C/2\Z)$ for $w\in H^2(S^3(L),S^3(L)\setminus K;\C/2\Z)$.
 
Theorem \ref{Kirby-spin} is a spin version of Kirby theorem that will
relate two surgery presentations of the same $3$-manifold with
complementary $\C/2\Z$-spin structure (see also \cite{BBC}).  The
appropriate Kirby moves will be obtained in Section \ref{S:moves} by
computing the obstructions on the elementary Kirby moves.

\begin{rem}\label{Rk:paral}
  Let $M$ be a 3-manifold equipped with  a
  ${\Spin}(M;\C/2\Z)$-structure $\sigma$.  Then $\sigma$ induces a function on
  isotopy classes of smooth framed oriented simple curve in $M$ by
  $$\sigma(\gamma)=\brk{\sigma,[\gamma]},$$
  where $[\gamma]$ is the integral first homology class of the framed curve
  $\gamma$ in the oriented framed bundle of $M$. For example,
  $\sigma(\text{unknot})=\wb1\in\C/2\Z$.  Then if $M=S^3\setminus(L\cup K)$ as
  above, we have $c_i=\sigma(m_i)+\wb1$ and $w_j=\sigma(m'_j)+\wb1$ where
  $m_i$ (resp. $m'_j$) is the standard meridian of the oriented component
  $L_i$ (resp. $K_j$).  Furthermore, $\sigma$ extends to $S^3(L)\setminus K$
  if and only if for any $\gamma$ parallel to a component of $L$, $\sigma(\gamma)=\wb1$. 
\end{rem}

\subsection{$\C$-colored links}
Let $q=\e^{\frac{i\pi}r}$ where $r\in 4\N^*=\{4,8,12,\ldots\}$ (a similar version exists
for odd $r$ but the associated topological invariants depend weakly on
the spin structure).  Recall (see \cite{CGP3}) the $\C$-algebra
$\Ubar$ given by generators $E, F, K, K^{-1}, H$ and relations:
\begin{align*}\label{E:RelDCUqsl}
  KEK^{-1}&=q^2E, & KFK^{-1}&=q^{-2}F, &
  [E,F]&=\frac{K-K^{-1}}{q-q^{-1}}, & E^r&=0,\\
  HK&=KH,  & [H,E]&=2E, & [H,F]&=-2F,& F^r&=0.
\end{align*}
The algebra $\Ubar$ is a Hopf algebra where the coproduct, counit and
antipode are defined in \cite{CGP3}.  Recall that $V$ is a
\emph{weight module} if $V$ splits as a direct sum of $H$-weight spaces
and $q^H=K$ as operators on $V$.

The category $\cat$ of weight $\Ubar$ modules is $\C/2\Z$ graded (by
the weights modulo $2\Z$) that is $\cat=\bigoplus_{\wb
  \alpha\in\C/2\Z}\cat_{\wb\alpha}$ and
$\otimes:\cat_{\wb\alpha}\times\cat_{\wb\beta}\to\cat_{\wb\alpha+\wb\beta}.$
The simple projective modules of $\cat$ are indexed by the set
\begin{equation}
  \Cp=(\C\setminus\Z)\cup r\Z.  
\end{equation}
More precisely, for $\alpha\in\Cp$, the $r$-dimensional module
$V_\alpha\in\cat_{\wb{\alpha+1}}$  is the
irreducible module with highest weight $\alpha+r-1$ (be aware of the shift between the
index $\alpha$ of a module and its degree $\wb{\alpha+1}$).  The category
$\cat$ is a ribbon category and we let $F$ be the usual
Reshetikhin-Turaev functor from $\cat$-colored ribbon graphs to $\cat$
(which is given by Penrose graphical calculus).  Here the
twist acts on $V_\alpha$ by the scalar
$$\theta_\alpha=q^{\frac{\alpha^2-(r-1)^2}2}.$$

The group of invertible modules of $\cat_{\wb0}$ is generated by $\ve$
which is the one dimensional vector space $\C$ on which $E,F,K+1$ and $H-r$ act
by $0$. For $k\in\Z$, we write $\ve^k$ for the module $\ve^{\otimes
  k}$ on which $E,F,K-(-1)^k$ and $H-kr$ act by $0$.  Remark that in
$\cat$, one has $V_{\alpha+kr}=V_\alpha\otimes\ve^k$.  Furthermore, if
$V\in \cat_{\wb{\alpha}}$ then we have
\begin{equation}
  \label{eq:sigmabraid}
  F\left( \put(5,17){$\ve$}\put(18,17){$V$}\epsh{fig26}{9ex}\right)=
  q^{r \wb{\alpha}}F\left(\put(5,17){$\ve$}\put(18,17){$V$}\epsh{fig27}{9ex}
  \right),\ \ F\left(
    \put(5,17){$\ve$}\put(18,17){$\ve$}\epsh{fig26}{9ex}\right)
  =F\left(\put(6,17){$\ve$}\put(18,17){$\ve$}\epsh{fig32}{9ex}
  \right),\ \ F\left( \put(8,10){$\ve$}\epsh{fig12}{9ex} \right)=-1.
\end{equation}

The link invariant underlying our construction is the re-normalized link
invariant (\cite{GPT}) that we recall briefly.  The modified dimension is the
function defined on $\{V_\alpha\}_{\alpha\in\Cp}$ by
$$\qd(\alpha)=-\frac{r\qn\alpha}{\qn {r\alpha}},$$
where $\qn\alpha= 2i\sin\frac{\pi \alpha}{r}$.  
Let $L$ be a $\cat$-colored oriented framed link in $S^3$ with at least one
component colored by an element of $\{V_\alpha:\alpha\in\Cp\}$.  Opening such
a component of $L$ gives a 1-1-tangle $T$ whose open strand is colored by some
$\alpha\in\Cp$ (here and after we identify $\Cp$ with the set of coloring
modules $\{V_\alpha\}$).  The Reshetikhin-Turaev functor associates an
endomorphism of $V_\alpha$ to this tangle.  As $V_\alpha$ is simple, this
endomorphism is a scalar $\brk T\in\C$.  The modified invariant is
$F'(L)=\qd(\alpha)\brk T$.
\begin{theo}[\cite{GPT}, see also \cite{ADO}]
  The assignment $L\mapsto F'(L)=\qd(\alpha)\brk T$ described above is an isotopy
  invariant of the colored framed oriented link $L$.
\end{theo}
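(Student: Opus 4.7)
My plan is to factor the problem through the construction of a \emph{modified trace} on the ideal of projective objects in $\cat$, after which isotopy invariance becomes formal. Recall (see \cite{GPT}) that such a modified trace is a family of $\C$-linear maps $\mathrm{t}_V:\operatorname{End}_\cat(V)\to\C$, defined for every projective $V$, satisfying the cyclicity axiom $\mathrm{t}_V(g\circ f)=\mathrm{t}_W(f\circ g)$ for $f:V\to W$, $g:W\to V$, together with the partial-trace property $\mathrm{t}_{V\otimes W}(h)=\mathrm{t}_V\bp{(\Id_V\otimes\operatorname{tr}_W^R)(h)}$ and its mirror image. Given such a $\mathrm{t}$ normalized by $\mathrm{t}_{V_\alpha}(\Id)=\qd(\alpha)$, one defines $F'(L):=\mathrm{t}_{V_\alpha}(F(T))$ for any 1-1 cut $T$ of $L$ opening a strand colored by $V_\alpha\in\Cp$. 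Cyclicity makes this independent of where along the chosen strand the cut is placed; the partial-trace property, applied in a small tubular neighborhood of any bridge between two projective-colored strands, makes it independent of which strand is chosen; and isotopy invariance within each cut class is inherited from the underlying Reshetikhin-Turaev functor $F$ on 1-1 tangles. So everything reduces to the existence of $\mathrm{t}$ with the prescribed normalization.

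The substantive step is the construction of $\mathrm{t}$. Simplicity of $V_\alpha$ forces $\mathrm{t}_{V_\alpha}(\Id)=\qd(\alpha)$ to determine $\mathrm{t}_{V_\alpha}$ completely. For a general projective $P\in\cat$, decomposing $P\simeq\bigoplus V_{\alpha_i}$ and extending via partial traces produces candidate values; the only thing to verify is that the outcome is independent of the decomposition and, crucially, of the direction in which partial traces are taken. Following the strategy of \cite{GPT}, this reduces to a single \emph{ambidexterity identity}: for all $\alpha,\beta\in\Cp$ such that $V_\alpha\otimes V_\beta$ splits as a direct sum of simple projectives $V_\gamma\in\Cp$, and for any retract $V_\gamma\to V_\alpha\otimes V_\beta\to V_\gamma$, the rescaled left and right partial traces through $V_\alpha$ and $V_\beta$ agree as scalars on $V_\gamma$.

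The main obstacle is the verification of this identity. I would first treat the generic case $\alpha,\beta\notin\Z$, $\alpha+\beta\notin\Z$, in which $V_\alpha\otimes V_\beta$ decomposes into $r$ typical simple summands; compute both partial traces explicitly via Clebsch-Gordan on $\Ubar$; and reduce the claim to a trigonometric identity in $\qn\alpha,\qn\beta$ and $\qn{r\alpha},\qn{r\beta}$ that is exactly forced by the normalization $\qd(\alpha)=-r\qn\alpha/\qn{r\alpha}$. For the remaining parameter values $\alpha\in r\Z\subset\Cp$, where the formula for $\qd$ is indeterminate, I would use the identification $V_{\alpha+kr}=V_\alpha\otimes\ve^k$ together with the braiding relations \eqref{eq:sigmabraid} to reduce to a single base case that can be computed by hand, then match the result with the continuous extension of $\qd$ to $r\Z$ obtained by L'Hospital. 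Alternatively, since the category $\cat$ considered here differs from the one in \cite{CGP3} only in the arithmetic of $r$, one can import the general existence statement directly from \emph{loc. cit.} and verify that the prefactor $-r\qn\alpha/\qn{r\alpha}$ is the one singled out by the partial-trace axiom. Once the ambidexterity identity is in hand, the theorem follows immediately from the formal framework of the first paragraph.
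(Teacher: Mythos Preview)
The paper does not prove this theorem; it is quoted from \cite{GPT} (with a pointer to \cite{ADO}) and used as a black box, so there is no in-paper argument to compare against. Your outline---construct a modified trace on the ideal of projectives via an ambidexterity identity for $\qd$, then set $F'(L)=\mathrm{t}_{V_\alpha}(F(T))$ and read off independence of the cut from cyclicity and the partial-trace axiom---is exactly the strategy of \cite{GPT}, so you have correctly reconstructed the cited proof rather than offered an alternative.

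One small imprecision worth flagging: the sentence ``decomposing $P\simeq\bigoplus V_{\alpha_i}$'' for a general projective $P$ is false in integer degree, where $\cat$ is genuinely non-semisimple and indecomposable projectives are not simple. This does not damage your argument, since the only place you actually use such a decomposition is in the ambidexterity check for $V_\alpha\otimes V_\beta$ with $\alpha,\beta\in\Cp$ and $\alpha+\beta\notin\Z$, where the splitting into typical simples does hold; but the general modified-trace machinery handles arbitrary projectives without ever invoking such a decomposition, and it is cleaner to phrase it that way.
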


\begin{defi}[Kirby color]
  For $\alpha\in\C\setminus\Z$, let $\wb\alpha\in\C/2\Z$ be its class
  modulo $2$.  We say that the formal linear combination of colors 
  $$\Omega_\alpha=\sum_{k=1}^{r/2}\qd(\alpha+2k-1)[\alpha+2k-1]$$ 
  is a Kirby color of degree $\wb \alpha$. 
\end{defi}

We can color a link by a formal linear combination of colorings and
expand it multilinearly.  In \cite{CGP1}, an invariant of 3-manifolds
equipped with cohomology class where constructed from the data of a
relative $G$-modular category. The category $\cat$ above failed
to be a relative $G$-modular category  because a constant
$\Delta_+$, used to compute the invariants, vanishes.  In the spin
case, the constant $\Delta_+$ can be replaced by the following $\Delta^\Spin_+$ whose
computation is similar to the one of $\Delta_+$ in \cite[Section
1.2]{CGP1}:
\begin{align}\label{E:Delta+}
  \Delta^\Spin_+&= \frac1{\qd(\alpha)}F'\bp{\epsh{fig3}{6ex}}
  \put(-27,3){\ms{\alpha}}\put(-28,-17){\ms{\Omega_\alpha}}
  =\frac1{\qd(\alpha)}\sum_{k=1}^{r/2}\qd(\alpha+2k-1)\frac{-rq^{(\alpha+2k-1)\alpha}}{\theta_{\alpha}\theta_{\alpha+2k-1}}\notag\\
  &=\frac{r}{\qn\alpha\theta_0^2}\sum_{k=1}^{r/2}q^{\alpha+\frac12}q^{-2(k-1)^2}-q^{-\alpha+\frac12}q^{-2k^2}\notag\\
  &=rq^{\frac12+(r-1)^2}\frac{1-i}2\sqrt r=\frac{1-i}2(rq)^{\frac32}
\end{align}
where we used the value of the Gauss sum $$\sum_{k=-r/2+1}^{r/2}q^{-2k^2}
=(1-i)\sqrt r=2\sum_{k=1}^{r/2}q^{-2k^2}=2\sum_{k=1}^{r/2}q^{-2(k-1)^2}.$$

\subsection{Compatible triple}
\begin{defi}[Compatible triple]
  Let $K=K_1\cup\cdots\cup K_\nu$ be a
  $\cat$-colored 
  framed oriented link with $\nu$ components in an oriented 3-manifold
  $M$.  Let $\sigma$ be a \spinst\ on $M\setminus K$ and $c_\sigma\in
  (\C/2\Z)^\nu$ be the associated $\C/2\Z$-coloring of the components
  of $K$ ($\sigma$ is complementary to the cohomology class given by
  $c_\sigma$).  We say that $(M,K,\sigma)$ is a {\em compatible
    triple} if each component $K_j$ of $K$ is colored by a module of
  degree $c_\sigma(K_j)$.
\end{defi}

\begin{prop}\label{P:epsilon=sigma}
  Let $(S^3,L\cup K,\sigma)$ be a compatible triple where at least one
  component of $L$ is colored by an element of $\Cp$ and the component
  $K$ is an oriented ribbon knot colored by $\ve$.  Then $\sigma$
  extends uniquely to a \spinst\ on $S^3\setminus L$ and
  $(S^3,L,\sigma)$ is a compatible triple.  Furthermore,
  $$F'(L\cup K)=q^{r\sigma(K)}F'(L).$$
\end{prop}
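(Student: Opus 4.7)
The proposition has three claims: unique extension of $\sigma$ from $S^3\setminus(L\cup K)$ to $S^3\setminus L$; compatibility of the extended $(S^3,L,\sigma)$; and the scalar identity $F'(L\cup K)=q^{r\sigma(K)}F'(L)$.

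For the first two claims, the key point is that $K$ is colored by $\ve\in\cat_{\wb 0}$, so compatibility of $(S^3,L\cup K,\sigma)$ forces $c_\sigma(K)=\wb 0$. By Remark \ref{Rk:paral}, this is equivalent to $\sigma$ taking value $\wb 1$ on the standard meridian of $K$, which is exactly the condition for $\sigma$ to extend across $K$. Uniqueness follows from an excision computation: $H^1(S^3\setminus L,S^3\setminus(L\cup K);\C/2\Z)\cong H^1(D^2\times S^1, S^1\times S^1;\C/2\Z)\cong H_2(D^2\times S^1;\C/2\Z)=0$, so the restriction map on $H^1(\cdot;\C/2\Z)$ is injective. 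The extension leaves the values $c_\sigma(L_i)$ untouched, so $(S^3,L,\sigma)$ remains compatible.

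For the scalar identity, my plan is to reduce $F'(L\cup K)$ to a multiple of $F'(L)$ using the local relations in \eqref{eq:sigmabraid}. Since $\ve$ is $1$-dimensional, its full double braiding with any object of degree $c_i$ is the scalar $q^{rc_i}$ (first relation); self-crossings of $K$ can be toggled for free (second relation); $\dim_q(\ve)=-1$ (third relation); and $\theta_\ve=-1$ (by combining $V_{\alpha+r}=V_\alpha\otimes\ve$ with $\theta_\alpha=q^{(\alpha^2-(r-1)^2)/2}$). Writing $c_i=c_\sigma(L_i)$, $\ell_i=\lk(K,L_i)$ and $\phi$ for the self-linking of $K$, the diagrammatic reduction (moving $K$ off $L$ via these local moves) collects the scalar
\[
F'(L\cup K)=\left(\prod_i q^{rc_i\ell_i}\right)(-1)^{\phi+1}F'(L).
\]

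Finally, to identify this with $q^{r\sigma(K)}$, I would compute $\sigma(K)$ via the framed bundle. Using that $S^3$ is parallelizable, trivialize the oriented framed bundle as $(S^3\setminus L)\times\SO(3)$; then $H_1$ of the total space equals $\Z^{\#L}\oplus\Z/2$ with the $\Z/2$ generated by a fiber loop. A framed curve $K$ with linkings $\ell_i$ and self-linking $\phi$ lifts to the class $\sum_i\ell_i[m_i^{\mathrm{hor}}]+\wb{\phi+1}\,[\mathrm{fiber}]$, where the $\wb{\phi+1}$ combines the framing with the single tangential twist of a planar circuit (consistent with $\sigma(\mathrm{unknot})=\wb 1$ from Remark \ref{Rk:paral}). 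Together with $\sigma(m_i^{\mathrm{hor}})=c_i$ and $\sigma(\mathrm{fiber})=\wb 1$, this yields $\sigma(K)=\sum_i\ell_ic_i+\wb{\phi+1}$, hence $q^{r\sigma(K)}=\prod_i q^{rc_i\ell_i}(-1)^{\phi+1}$, matching the scalar above. The main obstacle is the sign bookkeeping: both the $(-1)^{\phi+1}$ from the diagrammatic side and the $\wb{\phi+1}$ from the cohomological side require identifying the extra contribution from the tangential twist of a planar framed circle, consistent with its non-triviality in $\pi_1\SO(3)=\Z/2$.
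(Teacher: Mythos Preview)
Your argument is correct and rests on the same mechanism as the paper's: the local relations \eqref{eq:sigmabraid} for an $\ve$-colored strand are exactly $q^{r(\,\cdot\,)}$ applied to the local relations that compute $\sigma(K)$, so reducing $K$ to an unknot on both sides gives matching scalars.

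The difference is purely in presentation. The paper does not compute either side in closed form; it simply writes down the three skein rules for $\sigma(K)$ (crossing $K$ over $L_i$ adds $c_\sigma(L_i)$, self-crossings are free, the trivial circle gives $\wb 1$) and observes that they coincide, after applying $q^{r(\,\cdot\,)}$, with the three relations in \eqref{eq:sigmabraid}. Since these rules determine both quantities uniquely, the identity follows. You instead carry the reduction through to the explicit formula $\prod_i q^{r c_i\ell_i}(-1)^{\phi+1}$, and you derive $\sigma(K)=\sum_i\ell_i c_i+\wb{\phi+1}$ geometrically via a parallelization of $TS^3$ rather than via the same skein rules. Your route has the advantage of making the framing contribution explicit (you compute $\theta_\ve=-1$, which the paper leaves implicit), and it gives a concrete formula for $\sigma(K)$; the paper's route is shorter and avoids the sign bookkeeping you flag as the main obstacle, since the bookkeeping is identical on both sides and cancels automatically.
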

\begin{proof}
  The obstruction to extending $\sigma$ to $S^3\setminus L$ is given
  by $c_\sigma(K)$.  But the compatibility condition for $K$ implies
  that $c_\sigma(K)=\wb0$ because $\ve\in\cat_{\wb0}$.  Then one can
  compute $\sigma(K)$ using some skein relations in $S^3\setminus L$.
  But these skein relations are precisely given by
   $$ \sigma(K)\left(\put(5,17){\ms{K}}\put(15,17){\ms{L_i}}
     \epsh{fig26}{9ex}\right)=
   \sigma(K)\left(\put(5,17){\ms{K}}\put(15,17){\ms{L_i}}\epsh{fig27}{9ex}
   \right)+c_\sigma(L_i),$$
    $$\sigma(K)\left(
      \put(5,17){\ms{K}}\put(18,17){\ms{K}}\epsh{fig26}{9ex}\right)
    =\sigma(K)\left(\put(6,17){\ms{K}}\put(18,17){\ms{K}}\epsh{fig32}{9ex}
    \right),\ \ \sigma(K)\left( \put(8,10){\ms{K}}\epsh{fig12}{9ex}
    \right)=\wb1.$$ Comparing with the skein relations
    \eqref{eq:sigmabraid}, we get the announced equality.
\end{proof}

\begin{cor}\label{C:colormodr}
  Let $L\cup K_\alpha$ be a link in $S^3$ with a component colored by
  $\alpha\in\Cp$ and let $L\cup K_{\alpha+nr}$ be the same link except
  that the color $\alpha$ is changed to $\alpha+nr$.  Then
  $$F'(L\cup K_{\alpha+nr})=q^{nr\sigma(K_\parallel)}F'(L\cup K_\alpha).$$
  where $K_\parallel$ is a framed curve parallel to $K_\alpha$.
\end{cor}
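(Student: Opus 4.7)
The plan is to leverage the categorical identity $V_{\alpha+nr}=V_\alpha\otimes\ve^{\otimes n}$ recorded just before \eqref{eq:sigmabraid}, together with the cabling principle for ribbon categories. Concretely, a component colored by a tensor product may be replaced, at the level of the Reshetikhin--Turaev functor, by a cable of parallel push-offs colored by the tensor factors. Hence
$$F'(L\cup K_{\alpha+nr})=F'\bigl(L\cup K_\alpha\cup K_\parallel^{(1)}\cup\cdots\cup K_\parallel^{(n)}\bigr),$$
where $K_\parallel^{(1)},\dots,K_\parallel^{(n)}$ are parallel push-offs of $K_\alpha$ along its framing, each colored by $\ve$. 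The modified dimension factor used to define $F'$ is attached to an opened component of $L$, which is untouched by this modification, so the identity transfers directly from the ordinary Reshetikhin--Turaev invariant to $F'$.

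Next, I would apply Proposition \ref{P:epsilon=sigma} iteratively to strip off the $\ve$-colored push-offs one at a time. At each step one must check that the current triple $\bigl(S^3,L\cup K_\alpha\cup K_\parallel^{(1)}\cup\cdots\cup K_\parallel^{(i)},\sigma\bigr)$ is compatible. Since $r\equiv 0\pmod 4$, the shift $nr$ is even, so $\wb{\alpha+nr}=\wb\alpha$ and the original compatibility of $\sigma$ on $L\cup K_\alpha$ persists through all intermediate links. For each $\ve$-colored push-off (degree $\wb 0$), the compatibility condition from Remark \ref{Rk:paral} demands $\sigma(m_{K_\parallel^{(i)}})=\wb 1$; this holds automatically because the meridian $m_{K_\parallel^{(i)}}$ bounds a small framed disc inside a tubular neighborhood of $K_\parallel^{(i)}$ disjoint from the rest of the link, making it a trivially framed unknot in the framed bundle.

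Each application of Proposition \ref{P:epsilon=sigma} contributes a multiplicative factor $q^{r\sigma(K_\parallel^{(i)})}$. Since all the push-offs $K_\parallel^{(i)}$ are mutually isotopic as framed curves in the complement of $L$ (they are the boundary components of nested framing annuli on $K_\alpha$), we have $\sigma(K_\parallel^{(i)})=\sigma(K_\parallel)$ for every $i$, and accumulating the $n$ factors yields
$$F'(L\cup K_{\alpha+nr})=q^{nr\sigma(K_\parallel)}F'(L\cup K_\alpha).$$

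The step I expect to require the most care is the topological cabling identification and the accompanying book-keeping of compatible-triple conditions at each strip-off. Both reduce to standard features of the ribbon-category formalism and to the elementary observation that the meridian of a push-off always bounds a framed disc disjoint from the remaining link, so neither presents a genuine obstacle.
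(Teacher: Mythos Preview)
Your argument is correct and follows the same route as the paper: use $V_{\alpha+nr}\cong V_\alpha\otimes\ve^{\otimes n}$, replace the tensor-colored strand by a cable of parallel copies, and then invoke Proposition~\ref{P:epsilon=sigma} to peel off the $\ve$-colored push-offs one at a time; the paper simply phrases this as ``induction on $n$''. One small caveat: your sentence about opening a component of $L$ presumes $L$ carries a $\Cp$-color, which is not guaranteed---when $K_\alpha$ is the only $\Cp$-colored component you must open it instead, and the cabling identity for $F'$ then uses $\qd(\alpha+r)=-\qd(\alpha)$ together with $\dim_q(\ve)=-1$; this is routine and the paper's terse proof glosses over it as well.
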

\begin{proof}
  The proof is by induction on $n$ using 
  $V_{\alpha+r}=V_\alpha\otimes\ve$ and the fact that the RT functor $F$ does
  not distinguish between a strand colored by $V\otimes V'$ and two
  parallel strands colored by $V$ and $V'$.  Then one can use
  Proposition \ref{P:epsilon=sigma}.
\end{proof}
\subsection{Link presentation of compatible triple}
\begin{defi}
  A link presentation is a triple $(L,K,\sigma)$ where 
  \begin{enumerate}
  \item $L$ is an oriented framed link in $S^3$,
  \item $K$ is a $\cat$-colored oriented framed link in $S^3\setminus L$,
  \item $\sigma$ is a \spinst\ on $S^3\setminus (L\cup K)$,
  \item the triple $(S^3\setminus L,K,\sigma)$ is compatible,
  \item $\sigma$ extends to a \spinst\ on $S^3(L)\setminus K$.
  \end{enumerate}
\end{defi}
Link presentations are regarded up to ambient isotopy.  We now give 
several important remarks on this definition:
\begin{rem}\ \label{R:linkp}
  \begin{enumerate}
  \item By the process of surgery on $L$, a link presentation gives
    rise to a compatible triple $(S^3(L),K,\sigma)$.  Reciprocally, if
    $\sigma$ is the restriction of any compatible \spinst\ on
    $S^3(L)\setminus K$ then $\sigma$ clearly satisfies (4) and (5).
  \item The \spinst\ $\sigma$ on $S^3\setminus (L\cup K)$ induces a
    $\C/2\Z$-coloring $c_\sigma$ of the components of $L\cup K$ which by (4) is
    compatible with the $\cat$-coloring of $K$.  Furthermore, Proposition
    \ref{P:spin-comb} implies that $c_\sigma$ determines $\sigma$.
  \item According to Remark \ref{Rk:paral}, the last item is equivalent to the
    fact that if $\gamma$ is a parallel to any component of $L$, then
    $\sigma(\gamma)=\wb1$.  It is also equivalent by Proposition
    \ref{P:spin-comb}, to the fact that $c$ satisfies Equation \ref{char2}
    where $c_i=c_\sigma(L_i)$ and $w_j=c_\sigma(K_j)$.
  \end{enumerate}
\end{rem}

\begin{defi}
  A link presentation $(L,K,\sigma)$ is {\em computable} if
  \begin{itemize}
  \item either $L=\emptyset$ and $K$ has a component colored by some
    $\alpha\in\Cp$,
  \item or for every component $L_i$ of $L$, $c_\sigma(L_i)\notin\Z/2\Z$.
  \end{itemize}
\end{defi}
\begin{prop}\label{P:kirbymodr}
  Let $(L,K,\sigma)$ be a computable link presentation.  We color every
  component of $L$ by a Kirby color of degree $c_\sigma(L_i)$ and compute its
  image by $F'$.  Then the complex number $F'(L,K,\sigma)$ obtained by this
  process is independent of the choice of the Kirby colors.
\end{prop}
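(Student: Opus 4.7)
The plan is to reduce the claim to invariance of $F'$ under the elementary replacement $\Omega_\alpha \rightsquigarrow \Omega_{\alpha+2}$ on a single component of $L$. Two Kirby colors of the same degree $\bar\alpha\in\C/2\Z$ differ by $\alpha' - \alpha \in 2\Z$, and by multilinearity of $F'$ in the color of each component one may vary one Kirby color at a time. If $L=\emptyset$ there is nothing to show, so we fix a component $L_i$ of $L$ and study the replacement $\Omega_\alpha \mapsto \Omega_{\alpha+2}$ on $L_i$, all other Kirby colors being held fixed.

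The key observation is that the sum defining the Kirby color telescopes: reindexing $k \mapsto k+1$ in $\Omega_{\alpha+2}$ one obtains
\[
\Omega_{\alpha+2} - \Omega_\alpha \;=\; \qd(\alpha+r+1)\,[\alpha+r+1] \;-\; \qd(\alpha+1)\,[\alpha+1].
\]
Thus, by multilinearity, the proposition reduces to the single identity
\[
\qd(\alpha+r+1)\,F'\bigl(\cdots V_{\alpha+r+1}\cdots\bigr) \;=\; \qd(\alpha+1)\,F'\bigl(\cdots V_{\alpha+1}\cdots\bigr),
\]
where $\cdots$ indicates that all other components of $L$ and $K$ carry their fixed colors.

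To establish this identity I would apply Corollary~\ref{C:colormodr} with $n=1$ to the component $L_i$: this yields $F'(\cdots V_{\alpha+r+1}\cdots)=q^{r\sigma(\gamma)}F'(\cdots V_{\alpha+1}\cdots)$, where $\gamma$ is a framed parallel of $L_i$. Condition (5) in the definition of a link presentation, interpreted via Remark~\ref{Rk:paral}, forces $\sigma(\gamma)=\overline 1$, so that $q^{r\sigma(\gamma)}=q^r=-1$. A short direct computation, based on $\qn{\alpha+r+1}=-\qn{\alpha+1}$ and $\qn{r(\alpha+r+1)}=\qn{r(\alpha+1)}$ (both elementary consequences of $r\in 4\N^*$), yields $\qd(\alpha+r+1)=-\qd(\alpha+1)$. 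The two minus signs cancel in the identity above, and the proposition follows.

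The only genuine subtlety, and hence the expected main obstacle, is to recognize that condition (5) of a computable link presentation is exactly what is needed to force $q^{r\sigma(\gamma)}=-1$ on parallels of each surgery component, so that the topological sign compensates the algebraic sign appearing in $\qd$. This compatibility is precisely what singles out the present spin setting from the purely cohomological setting of \cite{CGP1}.
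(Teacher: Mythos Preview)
Your proof is correct and follows essentially the same route as the paper's own argument: both compute the telescoping difference $\Omega_{\alpha+2}-\Omega_\alpha=\qd(\alpha+r+1)[\alpha+r+1]-\qd(\alpha+1)[\alpha+1]$, invoke Corollary~\ref{C:colormodr} together with condition~(5) (via Remark~\ref{Rk:paral}/Remark~\ref{R:linkp}(3)) to obtain the sign $q^{r\cdot\overline 1}=-1$, and observe that this is cancelled by $\qd(\alpha+r+1)=-\qd(\alpha+1)$. The only cosmetic difference is that the paper phrases the key step as invariance under $\qd(\alpha)V_\alpha\rightsquigarrow\qd(\alpha+nr)V_{\alpha+nr}$ for general $n$ before specializing, whereas you work directly with $n=1$.
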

\begin{proof}
  If a component $L_i$ of $L$ is colored by $\qd(\alpha)V_\alpha$ then
  changing it to $\qd(\alpha+nr)V_{\alpha+nr}$ does not affect $F'$.
  Indeed, by Remark \ref{R:linkp}(3), and by Corollary
  \ref{C:colormodr} the invariant changes by the sign $q^{rn}=(-1)^n$.
  But $\qd(\alpha+nr)=(-1)^n\qd(\alpha)$ so the two possible signs
  compensate.  Hence changing the Kirby color used to color $L_i$ from
  $\Omega_\alpha$ to
  $$\Omega_{\alpha+2}=\Omega_\alpha-\qd(\alpha+1)[\alpha+1]
  +\qd(\alpha+r+1)[\alpha+r+1]$$
  does not change the invariant.

  Remark for later that if, instead of $\Omega_\alpha$, one colors a
  component with the formal combination
  $\wt\Omega_\alpha=\frac12\sum_{k=1}^{r}\qd(\alpha+2k-r-1)[\alpha+2k-r-1]$,
  it does not change the invariant.
\end{proof}
\section{Moves on link presentations}\label{S:moves}
Here we present five moves on link presentations, then in Section \ref{S:TheInv} we will use these moves to show that the invariant is well defined.  In the following
pictures involving link presentations, the surgery link $L$ is colored in
red and the $\cat$-colored link $K$ is colored in blue.  A black
strand can be any component of $K\cup L$.  The first three moves are
the spin version of the usual Kirby moves.  As remarked by Gompf and
Stipsicz (see \cite[Chapter 5]{GS}), every Kirby move describes a
canonical isotopy class of diffeomorphisms between the  surgered 3-manifolds.
  The purpose of the fourth and fifth moves is to create
a generically colored knot in the 3-manifold.
\subsection{Orientation change}
$$\epsh{fig10b}{8ex}\put(0,-3){\ms{\color{red}\wb\alpha}}\quad
\longleftrightarrow\quad\epsh{fig11b}{8ex}\put(0,-3){\ms{\color{red}-\wb\alpha}}$$
\begin{defi}[Orientation move]
  The link presentations $(L,K,\sigma)$ and $(L',K,\sigma)$ are
  related by an orientation move if $L=L'$ as an unoriented framed
  link but the orientation of the components of $L'$ might differ from
  those of $L$.  Remark that the \spinst\ does not change but
  $c_\sigma(L_i)=-c_\sigma(L_i')$ if the orientation of $L_i$ has
  changed.
\end{defi}
\begin{prop}
  If two link presentations $(L,K,\sigma)$ and $(L',K,\sigma)$ are
  related by an orientation move and if they are computable then 
  $$F'(L,K,\sigma)=F'(L',K,\sigma)$$
\end{prop}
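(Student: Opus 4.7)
The plan is to reduce to the case where a single component $L_i$ of $L$ has its orientation reversed, the general case then following by iteration. Set $\wb\alpha = c_\sigma(L_i)\in\C/2\Z$. By Proposition \ref{P:kirbymodr} together with the computability hypothesis, I may evaluate $F'(L,K,\sigma)$ using the Kirby color $\Omega_\alpha$ on $L_i$, and $F'(L',K,\sigma)$ using the Kirby color $\Omega_{-\alpha}$ on $L'_i$. The goal is to show these two numbers are equal.

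The first ingredient is the standard ribbon-category fact that reversing the orientation of a closed component colored by a module $V$ is equivalent to keeping the orientation and recoloring by $V^*$. In $\cat$ a quick inspection of weights shows $V_\beta^*\cong V_{-\beta}$, and the formula for $\qd$ gives $\qd(\beta)=\qd(-\beta)$. Applying this to the Kirby color on $L'_i$ yields
$$F'(L',K,\sigma)=F'\bp{L,K,\sigma\text{ with }\Omega_{-\alpha}^*\text{ on }L_i},$$
where $\Omega_{-\alpha}^*:=\sum_{k=1}^{r/2}\qd(-\alpha+2k-1)[V_{-\alpha+2k-1}^*]$ denotes the formal sum of duals.

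The second ingredient is a direct computation identifying $\Omega_{-\alpha}^*$ with a shift of $\Omega_\alpha$. Using $V_\beta^*\cong V_{-\beta}$, reindexing $k\mapsto r/2+1-k$, and invoking the identities $V_{\beta-r}=V_\beta\otimes\ve^{-1}$ and $\qd(\beta-r)=-\qd(\beta)$ (the latter relying on $r\in 4\N^*$, so that $q^r=-1$), I expect to arrive at the formal equality
$$\Omega_{-\alpha}^*=-\,\Omega_\alpha\otimes\ve^{-1}.$$
By bilinearity of $F$, placing $\Omega_\alpha\otimes\ve^{-1}$ on $L_i$ is the same as placing $\Omega_\alpha$ on $L_i$ and adjoining a $0$-framed parallel copy of $L_i$ colored by $\ve^{-1}$.

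At this point the definition of a link presentation pays off: condition (5) together with Remark \ref{R:linkp}(3) forces $\sigma(\gamma)=\wb1$ for every curve $\gamma$ parallel to $L_i$. The direct analogue of Proposition \ref{P:epsilon=sigma} for $\ve^{-1}$ then yields a scalar factor $q^{-r}=-1$, which exactly cancels the minus sign from the previous step, giving $F'(L',K,\sigma)=F'(L,K,\sigma)$. The one delicate point is the sign bookkeeping in establishing $\Omega_{-\alpha}^*=-\Omega_\alpha\otimes\ve^{-1}$; the other two steps are formal consequences of results already in the excerpt.
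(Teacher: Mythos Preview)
Your proof is correct and follows essentially the same approach as the paper: reverse orientation by dualizing the color, then identify the dual of a Kirby color with (an equivalent of) the Kirby color of the negated degree. The paper compresses this into the single line ``$V_\alpha^*\simeq V_{-\alpha}$ and $\Omega_\alpha^*\simeq\Omega_{-\alpha}$'', where the second isomorphism is to be read as an equality after applying Proposition~\ref{P:kirbymodr}.

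One simplification worth noting: your reindexing already gives the exact equality of formal sums $\Omega_{-\alpha}^*=\Omega_{\alpha-r}$, and $\Omega_{\alpha-r}$ is by definition a Kirby color of degree $\wb\alpha$ (since $r$ is even). At that point you can cite Proposition~\ref{P:kirbymodr} directly and stop, rather than rewriting $\Omega_{\alpha-r}$ as $-\Omega_\alpha\otimes\ve^{-1}$ and invoking the spin condition by hand. What you do instead is reprove the relevant case of Proposition~\ref{P:kirbymodr} inline via the $\ve^{-1}$ parallel copy and Remark~\ref{R:linkp}(3); this is fine, and indeed makes the role of condition~(5) in the definition of a link presentation more visible, but it is not logically needed once Proposition~\ref{P:kirbymodr} is available. (A small wording issue: the push-off along the framing of $L_i$ is not ``$0$-framed'' in $S^3$ in general; what matters is that it is a framed parallel, so that Remark~\ref{R:linkp}(3) applies.)
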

\begin{proof}
  The following changes does not affect the invariant $F'$ of
  $\cat$-colored link:
  \begin{itemize}
  \item change a $\cat$-color $V$ of $L$ with an isomorphic module, 
  \item change the orientation of a component of $L$ and simultaneously
    change its $\cat$-color with its dual.
  \end{itemize}
  Hence the proposition follows from the fact that for $\alpha\in\Cp$,
  $V_\alpha^*\simeq V_{-\alpha}$ and $\Omega_{\alpha}^*\simeq
  \Omega_{-\alpha}$.
\end{proof}
\begin{rem}
  The manifold obtained by surgery does not depends of the orientation
  of the surgery link.
\end{rem}
\subsection{Stabilization}
The usual Kirby I move is a stabilization that consists in adding to $L$ an
unknot with framing $\pm1$.  This move always leads to a non computable
presentation.  For this reason, we are led as in \cite{CGP1} to introduce a
modified stabilization.
$$\epsh{fig1}{12ex}\put(-45,-3){\ms{\color{red}\wb{\alpha+1}}}
\put(-11,-19){\ms{\wb\alpha}}
\qquad\stackrel{\text{KI}^-}\longleftrightarrow\qquad
\epsh{fig10}{12ex}\put(-3,0){\ms{\wb\alpha}}
\qquad\stackrel{\text{KI}^+}\longleftrightarrow\qquad
\epsh{fig2}{12ex}\put(-45,-3){\ms{\color{red}\wb{\alpha+1}}}
\put(-11,-19){\ms{\wb\alpha}}$$
\begin{defi}[KI$^\pm$-move]
  The link presentation $(L'\cup o,K',\sigma')$ is obtained from the
  link presentation $(L,K,\sigma)$ by a positive KI$^\pm$-move if $o$
  is an unknot with framing $\pm1$ obtained by adding a full positive or
  negative twist to a meridian of a component $J$ of $L\cup K$.  We
  orient this meridian so that its linking number with $J$ is $\mp1$.
  The only changed component of $L\cup K$ is $J$ whose corresponding
  component $J'$ of $L'\cup K'$ is obtained by adding a full twist to
  $J$.  The \spinst\ $\sigma'$ is determined by ${c_{\sigma'}}_{|L\cup
    K}=c_\sigma$ and $c_{\sigma'}(o)=c_\sigma(J)+\wb1$.

  We say that $(L,K,\sigma)$ is obtained from $(L\cup o,K,\sigma')$ by a
  negative KI$^\pm$-move.
\end{defi}

\begin{prop}
  A KI$^\pm$-move between $(L,K,\sigma)$ and $(L'\cup o,K',\sigma')$
  induces a canonical (up to isotopy) diffeomorphism $(S^3(L)
  ,K,\sigma)\simeq(S^3(L'\cup o) ,K',\sigma')$.
\end{prop}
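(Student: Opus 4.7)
The plan is to recognize the KI$^\pm$-move as a blow-down operation from classical Kirby calculus, which supplies a canonical (up to isotopy) diffeomorphism of the surgered $3$-manifolds; the remaining work is to verify that this diffeomorphism carries the decoration $(K,\sigma)$ to $(K',\sigma')$.

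First I would produce the underlying diffeomorphism by blowing down the unknot $o$. Since $o$ is an unknotted $\pm 1$-framed curve bounding a disk $D$ in $S^3$ that meets $L'\cup K'$ transversely at a single point on $J'$, the manifold obtained by surgery on $o$ alone is canonically diffeomorphic to $S^3$, the identification being the classical Rolfsen twist: any strand passing through $D$ acquires a full $\mp$-twist while all other data is unchanged. Applied to $L'\cup K'$, this twist removes the extra full twist on $J'$, returning it to $J$, and fixes everything else. Composing gives a canonical (up to isotopy) diffeomorphism $S^3(L'\cup o)\simeq S^3(L)$ sending $K'$ to $K$.

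It then remains to check that $\sigma'$ is pulled back to $\sigma$ by this diffeomorphism. By Proposition \ref{P:spin-comb} the spin structures on $S^3(L'\cup o)\setminus K'$ extending over the surgery correspond to solutions of the characteristic equation \ref{char2} for $L'\cup o$. Locally the linking matrix of $L'\cup o$ differs from that of $L$ only through $B_{oo}=\pm 1$, $B_{oJ'}=\mp 1$, and $B_{J'J'}=B_{JJ}\pm 1$; all other entries and all linking numbers with components of $K$ are unchanged (if $J$ is itself a component of $K$, then additionally $\mathrm{lk}(o,J)=\mp 1$, treated analogously). The new equation at the index $o$ reduces to $\pm c_{\sigma'}(o) \mp c_\sigma(J) \equiv \pm 1 \pmod 2$, which holds precisely because $c_{\sigma'}(o) = c_\sigma(J) + \wb 1$. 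At the index $J$ (for $J\in L$), the additional contributions $\pm c_\sigma(J)$ from the framing change and $\mp c_{\sigma'}(o)$ from the new off-diagonal linking with $o$ cancel modulo $2$ once $c_{\sigma'}(o)=c_\sigma(J)+\wb 1$ is substituted, reducing the equation to the original one at $J$; at every other index the equation is unaffected. Hence $c_{\sigma'}$ satisfies the characteristic equation for $L'\cup o$, and $\sigma'$ is exactly the spin structure transported from $\sigma$.

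The main obstacle is bookkeeping rather than conceptual content: keeping the sign conventions of the KI$^\pm$-move consistent with the sign of the framing change and with $\mathrm{lk}(o,J)=\mp 1$, and running the two cases $J\in L$ and $J\in K$ in parallel. Once the signs are fixed, the mod-$2$ verification proceeds in a few lines as sketched, and the topological core reduces to the standard Rolfsen twist.
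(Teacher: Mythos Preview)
Your approach is the same as the paper's: identify the KI$^\pm$-move with the standard blow-down/Rolfsen twist to obtain the canonical diffeomorphism, and then use the combinatorial description of $\C/2\Z$-spin structures (Proposition~\ref{P:spin-comb}) to handle $\sigma$. The paper proceeds in the opposite order from you: it \emph{defines} $\sigma'$ as the pushforward $f_*\sigma$ and then computes its $c$-values, using locality of $f$ to get ${c_{\sigma'}}_{|L\cup K}=c_\sigma$ and the characteristic equation at the row $o$ to read off $c_{\sigma'}(o)=c_\sigma(J)+\wb1$.

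There is one logical gap in your version that you should close. Verifying that the tuple $c_{\sigma'}$ satisfies Equation~\eqref{char2} only shows that $\sigma'$ extends to $S^3(L'\cup o)\setminus K'$, i.e.\ that $(L'\cup o,K',\sigma')$ is a link presentation; it does \emph{not} by itself show that $\sigma'$ is the spin structure transported from $\sigma$ by $f$. Many solutions of~\eqref{char2} exist (an affine space over $H^1$), so satisfying the equation is not a characterization. The missing sentence is the one you almost wrote when you said ``all other data is unchanged'': since the Rolfsen twist is the identity outside a tubular neighborhood of $J$, it carries each meridian $m_{L_i}$, $m_{K_j}$ to the corresponding meridian on the other side; hence $c_{f_*\sigma}|_{L\cup K}=c_\sigma|_{L\cup K}=c_{\sigma'}|_{L\cup K}$. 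Now both $f_*\sigma$ and $\sigma'$ satisfy~\eqref{char2}, and the row at $o$ (which has invertible diagonal entry $\pm1$) determines the value at $o$ uniquely from the remaining $c$-values; therefore $c_{f_*\sigma}(o)=c_{\sigma'}(o)$ and, by the bijection of Proposition~\ref{P:spin-comb}, $f_*\sigma=\sigma'$. Once this is made explicit your argument is complete; your separate verification of the equation at the index $J$ is then redundant (it follows automatically because $f_*\sigma$ extends), but harmless.
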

\begin{proof}
  The KI-move does not change $L\cup K$ outside a tubular neighborhood
  $T$ of $J$.  There is a canonical diffeomorphism $f:(S^3(L),K)\simto
  (S^3(L'),K')$.  Hence there exists a \spinst\ $\sigma'$, which is
  the image of $\sigma$ by $f$, such that $(S^3(L'),K',\sigma')$ is a
  compatible triple.  Furthermore, the diffeomorphism $f$ is the
  identity outside $T$. Thus $(L',K',\sigma')$ is a link presentation,
  furthermore $f$ send meridian of the components of $L\cup K$ to the
  corresponding meridian of the components of $L'\cup K'$ thus
  ${c_{\sigma'}}_{|L\cup K}=c_\sigma$.  Finally, the new component has
  framing $\pm1$ and is only linked $\mp1$ times with $J'$ so Equation
  \eqref{char2} implies that $c_{\sigma'}(o)=c_\sigma(J)+\wb1$.
\end{proof}
\begin{prop}
  If $(L\cup o,K,\sigma')$ is obtained from $(L,K,\sigma)$ by a positive
  KI$^\pm$-move and if they are both computable, then
  $$F'(L\cup o,K,\sigma')=\Delta^\Spin_{\pm}F'(L,K,\sigma).$$
  where $\Delta^\Spin_+=(1-i)r^{\frac32}q^{\frac32}$ and
  $\Delta^\Spin_-=(1+i)r^{\frac32}q^{-\frac32}$ is the conjugate
  complex number.
\end{prop}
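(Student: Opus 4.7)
The plan is to localize the KI$^\pm$-move inside a ball containing the new unknot $o$ and the adjacent strand of the component $J$. Using multilinearity of $F'$ in any Kirby color that $L$ may carry on $J$, I reduce to the case where the strand of $J$ is labeled by a single simple projective $V_\beta$ with $\beta\in\Cp$ of degree $\wb{\beta+1}=c_\sigma(J)$. The spin constraint $c_{\sigma'}(o)=c_\sigma(J)+\wb 1$ then forces any admissible Kirby color on $o$ to have degree $\wb\beta$; by Proposition~\ref{P:kirbymodr} I may take it to be $\Omega_\beta$.

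Since $V_\beta$ is simple, the local $(1,1)$-tangle $T_\pm(\beta)$ inserted by the move --- a full $\pm$-twist on the $V_\beta$-strand together with an $\Omega_\beta$-colored meridian $o$ of framing $\pm 1$ and linking number $\mp 1$ --- represents a scalar multiple $\lambda_\pm(\beta)\,\Id_{V_\beta}$. Because the move modifies $L\cup K$ only inside the chosen ball, the proposition reduces to showing $\lambda_\pm(\beta)=\Delta^\Spin_\pm$ for every admissible $\beta$; reassembling the multilinear expansion then produces the claimed overall factor $\Delta^\Spin_\pm$ in front of $F'(L,K,\sigma)$.

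To extract $\lambda_\pm(\beta)$, I close $T_\pm(\beta)$ along a trivial $V_\beta$-arc to obtain a two-component link $\wb{T_\pm(\beta)}$ and use the standard identity $\lambda_\pm(\beta)=\qd(\beta)^{-1}F'\bigl(\wb{T_\pm(\beta)}\bigr)$. A direct isotopy identifies $\wb{T_\pm(\beta)}$ with the diagram used in \eqref{E:Delta+} to define $\Delta^\Spin_\pm$: an $\Omega_\beta$-colored unknot Hopf-linked with a $V_\beta$-colored unknot of framing $\pm 1$, the latter framing being produced by the full $\pm$-twist on $J$. Hence $\lambda_\pm(\beta)=\Delta^\Spin_\pm$, and the Gauss-sum evaluation carried out in \eqref{E:Delta+} confirms that this scalar is independent of $\beta$. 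The value $\Delta^\Spin_-=\wb{\Delta^\Spin_+}$ follows either by running the same closure computation with opposite framings or by reversing the ambient orientation of $S^3$, which sends $q$ to $q^{-1}$.

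The step requiring the most care is the bookkeeping of framings, orientations, and linking signs when matching $\wb{T_\pm(\beta)}$ with the diagram of \eqref{E:Delta+}. The orientation of $o$ is fixed so that $\lk(o,J)=\mp 1$, and the sign of the full twist on $J$ must match the sign of the framing on $o$; any mismatch would turn $\Delta^\Spin_\pm$ into $\Delta^\Spin_\mp$ or into a sign-flipped variant. The spin constraint $c_{\sigma'}(o)=c_\sigma(J)+\wb 1$ is precisely what guarantees from the outset that an $\Omega$-color of degree compatible with that of $V_\beta$ is available on $o$ to run this argument.
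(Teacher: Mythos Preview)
Your approach is essentially the same as the paper's: localize, use simplicity of $V_\beta$ to get a scalar endomorphism, identify that scalar with $\Delta^\Spin_\pm$ via the closure computation of \eqref{E:Delta+}, and note its independence of $\beta$.

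One case is not fully covered by your reduction. When $J$ is a component of $K$ colored by an arbitrary module $V\in\cat_{c_\sigma(J)}$ (not a priori simple, and not carrying a Kirby color), your ``multilinearity in any Kirby color that $L$ may carry on $J$'' does not apply. The paper handles this by observing that computability of $(L\cup o,K,\sigma')$ forces $c_{\sigma'}(o)=c_\sigma(J)+\wb1\notin\Z/2\Z$, hence $c_\sigma(J)\notin\Z/2\Z$, so $\cat_{c_\sigma(J)}$ is semi-simple with simple objects among the $V_\beta$. Since the scalar $\Delta^\Spin_\pm$ is the same on every simple summand, the local tangle acts as $\Delta^\Spin_\pm\,\Id_V$ for any $V$ in that degree. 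Adding this sentence makes your argument complete.
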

\begin{proof} We give the proof for a KI$^+$-move.  The other case is
  similar.  Let $\alpha\in\C\setminus \Z$, since 
  $V_\alpha$ is simple $F\left(\,\epsh{fig1}{8ex}
    \put(-24,-3){\ms{\Omega_\alpha}}\put(-7,-12){\ms{\alpha}}\right)$
  is a scalar endomorphism of $V_\alpha$.  From Equation \eqref{E:Delta+} this scalar is equal to $\Delta^\Spin_+$.  Moreover,  this scalar times
  $\qd(\alpha)$ is equal  $F'$ of the braid closure of this 1-1
  tangle.    Now as
  before, $\Delta^\Spin_+$ does not change if we replace ${\Omega_\alpha}$
  by any Kirby color of degree $\wb\alpha$.  Furthermore, $\Delta^\Spin_+$ 
  is independent of $\alpha$ so this scalar is the same for any simple
  module of $\cat_{\wb{\alpha+1}}$.  As $\cat_{\wb{\alpha+1}}$ is semi-simple, if we replace $V_\alpha$ with any module
  $V\in\cat_{\wb{\alpha+1}}$, then this tangle evaluates to
  $\Delta^\Spin_+\Id_V$.  It follows that whatever the value of 
  $c_\sigma(J)\in\C/2\Z\setminus \Z/2\Z$ is we have $F'(L\cup
  o,K,\sigma')=\Delta^\Spin_{+}F'(L,K,\sigma).$
\end{proof}
\subsection{Handle slide}
$$\epsh{K2c}{12ex}\put(-40,1){\ms{\color{red}\wb{\alpha}}}
\put(-58,1){\ms{\wb\beta}}
\qquad\stackrel{\text{KII}}\longleftrightarrow\qquad
\epsh{K2d}{12ex}\put(-40,1){\ms{\color{red}\wb{\alpha}-\wb\beta}}\put(-59,-17){\ms{\wb\beta}}
$$
\begin{defi}[KII-move]
  The link presentations $(L,K,\sigma)$ and $(L',K',\sigma')$ are
  related by a KII-move if 
  $L_i$ is a component of $L$ which is different from a component $J$ of $L\cup K$ 
 and $J'$ is a component of $L'\cup K'$ such
  that $L'\setminus J'=L\setminus J$ and $J'$ is a connected sum of
  $J$ with a parallel copy of $L_i$.  If $J$ is a component of $K$,
  then $J'$ and $J$ have the same $\cat$-color.  The \spinst\
  $\sigma'$ is determined by ${c_{\sigma'}}_{|(L\cup K')\setminus
    L_i'}={c_{\sigma}}_{|(L\cup K)\setminus L_i}$ and
  $c_{\sigma'}(L_i')=c_{\sigma}(L_i)-c_\sigma(J)$.
\end{defi}
\begin{prop}
   A KII-move between $(L,K,\sigma)$ and $(L',K',\sigma')$
  induces a canonical (up to isotopy) diffeomorphism $(S^3(L)
  ,K,\sigma)\simeq(S^3(L') ,K',\sigma')$.
\end{prop}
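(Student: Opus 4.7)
The plan is two-fold: first, produce the underlying diffeomorphism from classical Kirby calculus, and second, verify that it sends $\sigma$ to $\sigma'$ by comparing the induced $\C/2\Z$-colorings via Proposition~\ref{P:spin-comb}.

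For the diffeomorphism, I would argue by cases on whether $J\in L$ or $J\in K$. When $J\in L$, the band sum $J\to J'=J\#(\text{parallel of }L_i)$ is a standard handle slide, and classical Kirby calculus supplies a canonical (up to isotopy) diffeomorphism $f:S^3(L)\simto S^3(L')$. When $J\in K$, the framed parallel of $L_i$ appearing in the band sum bounds the meridian disk of the new surgery solid torus for $L_i$ in $S^3(L)$, so $J'$ is isotopic to $J$ in $S^3(L)\setminus (K\setminus J)$ via an isotopy through this disk; this produces the desired diffeomorphism of pairs. In both cases, $f$ can be chosen to be the identity outside a neighborhood of the band and disk realizing the slide.

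To verify $f_*\sigma=\sigma'$, note that $f_*\sigma$ extends over $S^3(L')$ since $\sigma$ extends over $S^3(L)$. By Proposition~\ref{P:spin-comb} and Remark~\ref{R:linkp}, it suffices to show that $f_*\sigma$ and $\sigma'$ induce the same $\C/2\Z$-coloring of $L'\cup K'$. For each component $X\neq L_i'$, $f$ can be chosen to fix a tubular neighborhood of $X$; for $X=J'$ this relies on choosing a meridian at a point of $J$ outside the band region, where $J$ and $J'$ coincide as framed curves. This gives $c_{f_*\sigma}(X)=c_\sigma(X)=c_{\sigma'}(X)$.

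The remaining identity $c_{f_*\sigma}(L_i')=c_\sigma(L_i)-c_\sigma(J)=c_{\sigma'}(L_i')$ is the heart of the verification. For this I would use the characteristic equation~\eqref{char2}: expressing the new linking matrix $B'$ in terms of $B$ via the slide (as a congruence $B'=\tilde E^T B \tilde E$ for an elementary matrix $\tilde E$ when $J\in L$, and tracking the analogous change of the $c'$-vector when $J\in K$) and imposing that $f_*\sigma$ must itself satisfy the new characteristic equation while its coloring on all other components agrees with $c_\sigma$, the value on $L_i'$ is pinned down to be $c_\sigma(L_i)-c_\sigma(J)$. The hardest part is carrying out this linear-algebraic check while tracking signs and orientations, ensuring the correction is the full $-c_\sigma(J)$ and is localised to the $L_i'$ coloring.
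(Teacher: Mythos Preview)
Your overall structure is sound and largely matches the paper's: build the canonical diffeomorphism $f$ from the handle-slide, note it is the identity outside a neighbourhood of the slide region, and then check the induced $\C/2\Z$-colorings agree. The paper localises the move inside a genus-$2$ handlebody $H$ (tubular neighbourhood of $L_i$, $J$, and the band) rather than splitting into the two cases $J\in L$, $J\in K$; your case split is fine but unnecessary.

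The genuine gap is in your last step. You propose to determine $c_{f_*\sigma}(L_i')$ by combining the new characteristic equation \eqref{char2} for $(L',K')$ with the already-established equalities $c_{f_*\sigma}(X)=c_\sigma(X)$ for $X\neq L_i'$. But the characteristic equation does \emph{not} in general pin down the remaining value. Concretely, if $L_i$ has framing $0$ and (when $J\in K$) $B_{ii}=0$, the $i$-th row of \eqref{char2} reads $0=0$ regardless of $c(L_i')$; and the other rows don't involve $c(L_i')$ at all since $L'\setminus L_i'=L\setminus L_i$ and only the $i$-th column of $B'$ sees $L_i'$. More conceptually, solutions of \eqref{char2} form an affine space over $H^1(S^3(L');\C/2\Z)$, and nothing in your argument rules out $f_*\sigma$ and $\sigma'$ differing by a nontrivial class supported on the $i$-th coordinate. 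So the claim that the value on $L_i'$ is ``pinned down'' by \eqref{char2} fails.

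The paper avoids this by a direct geometric computation: it observes that $f$ carries the meridian $m_{L_i}$ to the connected sum $m_{L_i'}\# m_{J'}$, while sending every other meridian to the corresponding one. Using Remark~\ref{Rk:paral} (i.e.\ $c(X)=\sigma(m_X)+\wb1$) and the fact that $\sigma$ is additive on connected sums up to the $\wb1$ correction, one gets
\[
\sigma(m_{L_i})=f_*\sigma(m_{L_i'}\# m_{J'})=f_*\sigma(m_{L_i'})+f_*\sigma(m_{J'})+\wb1,
\]
and hence $c_{f_*\sigma}(L_i')=c_\sigma(L_i)-c_\sigma(J)$. Replace your characteristic-equation argument with this meridian-tracking step and the proof goes through; everything else you wrote is essentially what the paper does.
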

\begin{proof}
  The KII-move does not change $L\cup K$ outside a genus 2 handlebody 
  $H$ embedded in $S^3$ formed by a tubular neighborhood of $J$, of
  $L_i$ and of the path from $J$ to the parallel copy of $L_i$ used to
  make the connected sum.  There is a canonical diffeomorphism
  $f:(S^3(L),K)\simto (S^3(L'),K')$.  Hence there exists a \spinst\
  $\sigma'$, which is the image of $\sigma$ by $f$, such that
  $(S^3(L'),K',\sigma')$ is a compatible triple.  The
  diffeomorphism $f$ is the identity outside $H$, so 
  $(L',K',\sigma')$ is a presentation.  Furthermore, $f$ sends meridians
  to the components of $L\cup K$ to the corresponding meridians of the
  components of $L'\cup K'$ except for the meridian of $L_i$ which is
  sent to the connected sum of the meridian of $L'_i$ with the
  meridian of $J'$.  Hence
  $\sigma(m_{L_i})=\sigma'(m_{L'i})+\sigma'(m_{J'})+\wb1$ where the
  $\wb 1$ comes from the connected sum.  This leads to the announced
  formula for $c_{\sigma'}$.
\end{proof}
  
\begin{prop}
  If two link presentations $(L,K,\sigma)$ and $(L',K',\sigma')$ are
  related by a KII-move and if they are both computable then 
  $$F'(L,K,\sigma)=F'(L',K',\sigma')$$
\end{prop}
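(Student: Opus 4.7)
The plan is to reduce to a local sliding identity for the Kirby color, in the same spirit as the proof of Kirby II invariance in the cohomological setting of \cite{CGP1}, and to verify that the spin refinement goes through. Since $(L,K,\sigma)$ and $(L',K',\sigma')$ agree outside the genus-$2$ handlebody $H$ used to perform the slide, and since $F$ is a functor, it suffices to show that the local pieces of both presentations inside $H$ define the same $\cat$-morphism. Fix lifts $\alpha,\beta\in\Cp$ of $c_\sigma(L_i)$ and $c_\sigma(J)$ respectively; computability of the two presentations forces $\alpha$ and $\alpha-\beta$ to lie in $\C\setminus\Z$, and, after expanding any Kirby color on $J$ term by term in case $J\subset L$, one may assume $J$ is colored by a single typical simple module $V_\beta$.

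The key input is the categorical sliding identity: the morphism obtained by coloring $L_i$ with $\Omega_\alpha$ and leaving a separate $V_\beta$-strand alongside equals the morphism obtained by coloring $L_i'$ with $\Omega_{\alpha-\beta}$ and band-summing the $V_\beta$-strand along $L_i'$. To prove this I would first replace $\Omega_\alpha$ by the symmetric Kirby color $\wt\Omega_\alpha$ introduced in the proof of Proposition \ref{P:kirbymodr}, which gives the same invariant but is summed over a full fundamental domain modulo $r$. Applying the generic fusion rule for typical simple $\Ubar$-modules and the multiplicativity of the modified dimension $\qd$ and twist $\theta$ across this decomposition, the double sum appearing on one side collapses, after a change of summation variable, to the Kirby-colored expression on the other side, with the shift in degree matching the one prescribed by the definition of the KII-move.

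The principal obstacle is the careful bookkeeping of the sign and phase factors coming from $\qd$ and $\theta$ under shifts by $r$, and verifying that the spin parity condition $c_{\sigma'}(L_i')=c_\sigma(L_i)-c_\sigma(J)$ is exactly what makes the reindexed sum land back in the correct $r/2$-term Kirby color; this parity condition, forced by the $\C/2\Z$-spin structure via Equation \ref{char2}, plays here the role of the cohomological shift in \cite{CGP1}. Once the local sliding identity is established, functoriality of $F$ together with Proposition \ref{P:kirbymodr} immediately gives the desired equality $F'(L,K,\sigma)=F'(L',K',\sigma')$.
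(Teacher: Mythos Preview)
Your plan is essentially the paper's own proof: replace $\Omega_\alpha$ by the full-period Kirby color $\wt\Omega_\alpha$ via the remark in Proposition~\ref{P:kirbymodr}, and then invoke the sliding identity \cite[Lemma~5.9]{CGP1}; the paper only adds the bookkeeping remark that the invertible object $\ve$ of \cite{CGP1} corresponds to $\ve^2$ here. One small caveat: when $J\subset K$ carries an arbitrary $\cat$-color (in particular one of degree $\wb0$) you cannot reduce to a typical simple $V_\beta$ with $\beta\in\Cp$, but this is harmless since the sliding lemma in \cite{CGP1} is stated for an arbitrary object sliding over the Kirby-colored component, so you may simply drop that reduction step.
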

\begin{proof} 
  First the remark in the proof of Proposition \ref{P:kirbymodr}
  implies that instead of using a Kirby color of degree
  $\Omega_{\alpha}$ we can compute the image by $F'$ of the link
  presentation using the 
  Kirby color
  $\frac12\wt\Omega_\alpha=
  \frac12\sum_{k=1}^{r}\qd(\alpha+2k-r-1)[\alpha+2k-r-1]$.
  Then the proof is the same that in \cite[Lemma 5.9]{CGP1} where the
  module $\ve$ used in \cite{CGP1} correspond to $\ve^2$ in this paper.  
\end{proof}
\subsection{Hopf stabilization}
$$\epsh{fig10c}{12ex}\put(-12,-2){\ms{{\alpha_i}}}
\qquad\stackrel{\text{Hopf move}}\longleftrightarrow\qquad
\lambda.\epsh{fig4}{12ex}\put(-13,-17){\ms{{\alpha_i}}}\put(-23,0){\ms{\beta}}$$
\begin{defi}[Hopf move]
  The link presentation $(L,K\cup o,\sigma')$ is obtained from the
  link presentation $(L,K,\sigma)$ by a positive Hopf move if
  $o$ is a zero framed meridian of a component $K_i$ of $K$.  The
  color $\alpha_i$ of $K_i$ has to be in $\Cp$ and the color of the
  newly added component is the coefficient $\lambda=\dfrac{\qd(\alpha_i)}{-rq^{\beta\alpha_i}}$ times a color $\beta\in\Cp$.  
\end{defi}
\begin{prop}
  If two link presentations $(L,K,\sigma)$ and $(L,K\cup o,\sigma')$
  are related by a Hopf stabilization then
  $$F'(L,K,\sigma)=F'(L,K\cup o,\sigma').$$
\end{prop}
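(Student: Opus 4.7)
The plan is to show that the newly added zero-framed meridian $o$, colored by $\lambda V_\beta$ with $\lambda=\qd(\alpha_i)/(-rq^{\beta\alpha_i})$, contributes the scalar $1$ to the RT computation; once this is proved, $F'(L,K\cup o,\sigma')=F'(L,K,\sigma)$ follows at once.

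First I would verify that $(L,K\cup o,\sigma')$ is a bona fide link presentation. Because $o$ is a zero-framed meridian of $K_i$ (not a component of $L$), it does not alter the linking data entering Equation \eqref{char2}, so the extension condition over $L$ is the same one already satisfied by $\sigma$. The compatibility of the $\cat$-coloring with the spin data on $o$ forces $c_{\sigma'}(o)=\wb{\beta+1}\in\C/2\Z$, matching $V_\beta\in\cat_{\wb{\beta+1}}$; by Remark \ref{R:linkp}(2) this determines $\sigma'$ uniquely. Then, since $o$ is unknotted, encircles $K_i$ once, and $V_{\alpha_i}$ is simple ($\alpha_i\in\Cp$), Schur's lemma forces the RT operator inserted by $o$ in the $V_{\alpha_i}$-strand to be a scalar $\mu_\beta\,\Id_{V_{\alpha_i}}$; cutting open $K_i$ as in the definition of $F'$ therefore gives
$$F'(L,K\cup o,\sigma')=\lambda\,\mu_\beta\,F'(L,K,\sigma).$$

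To compute $\mu_\beta$ I would locally close the $V_{\alpha_i}$-strand and identify the resulting configuration with the zero-framed Hopf link colored by $V_{\alpha_i}$ and $V_\beta$, so that $\mu_\beta\,\qd(\alpha_i)=F'(\text{Hopf}_0(V_{\alpha_i},V_\beta))$. The standard ribbon calculation in $\cat$ evaluates the right-hand side to $-rq^{\beta\alpha_i}$: this is precisely the Hopf term appearing inside the sum of \eqref{E:Delta+}, with the twist factors $\theta_\alpha$ and $\theta_{\alpha+2k-1}$ absent because both framings are now zero. Consequently $\mu_\beta=-rq^{\beta\alpha_i}/\qd(\alpha_i)$ and $\lambda\mu_\beta=1$.

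The main obstacle is the careful bookkeeping behind the value $F'(\text{Hopf}_0(V_{\alpha_i},V_\beta))=-rq^{\beta\alpha_i}$: one must check that the $R$-matrix acts on the highest-weight vector of $V_\beta$ by $q^{\beta\alpha_i}$ and that the partial trace over $V_\beta$ produces the factor $-r$, following the calculation of $\Delta^\Spin_+$ in \eqref{E:Delta+} and the analogous Hopf-move step in \cite{CGP1}. This is routine but delicate ribbon-category algebra, and it is precisely the reason the coefficient $\lambda$ has the specific form given in the statement.
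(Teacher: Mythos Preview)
Your proposal is correct and follows essentially the same approach as the paper: use the simplicity of $V_{\alpha_i}$ to reduce the encircling meridian to a scalar endomorphism, identify that scalar as $-rq^{\beta\alpha_i}/\qd(\alpha_i)$, and observe that it cancels against $\lambda$. The paper simply cites \cite{GPT} for the value of this scalar rather than sketching the Hopf-link computation as you do, and it omits the bookkeeping about $\sigma'$ being a valid link presentation, but the argument is the same.
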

\begin{proof}
  The simplicity of $V_\alpha$ for $\alpha\in\Cp$ implies that
  $F\left(\,\epsh{fig4}{6ex}
    \put(-18,-3){\ms{\beta}}\put(-7,-12){\ms{\alpha}}\right)$ is a
  scalar endomorphism of $V_\alpha$.  This scalar is computed (for
  example in \cite{GPT}) and is given by
  $\dfrac{-rq^{\beta\alpha_i}}{\qd(\alpha_i)}$.  It follows that
  $F'(L,K\cup o,\sigma')$ is just
  $\dfrac{-rq^{\beta\alpha_i}}{\qd(\alpha_i)}\lambda$ times
  $F'(L,K,\sigma')$.
\end{proof}
\begin{rem}
  There is a similar notion of Hopf stabilization for compatible
  triple $(M,K,\sigma)$.  The resulting compatible triple
  $(M,K\cup o,\sigma')$ can be seen as a banded connected sum of
  $(M,K,\sigma)$ with $(S^3,H,\sigma_H)$ where $H$ is proportional to
  a Hopf link in $S^3$.  Furthermore, a Hopf stabilization of a
  surgery presentation of a compatible triple is clearly a surgery
  presentation of a Hopf stabilization of the triple.
\end{rem}

\subsection{Birth move}
In the following positive move two new components appear in the $K$ part of a link presentation. 
$$\epsh{fig22}{12ex}
\qquad\stackrel{\text{Birth move}}\longleftrightarrow\qquad
\frac1{|\Delta^\Spin_+|^2}\qquad
{\put(-9,12){\ms{{\Omega_\alpha}}}\put(-8,-12){\ms{{\Omega_\alpha}}}\epsh{fig21}{12ex}}$$
\begin{defi}[Birth move]
  The link presentation $(L,K\cup K_+\cup K_-,\sigma')$ is obtained
  from the computable link presentation $(L,K,\sigma)$ by a \emph{positive
  birth move} if it is given by the following process: Let $D$ be an
  oriented disc in $S^3$ and $\partial D$ its oriented framed
  boundary.  We assume that $\partial D$ is in general position
  relatively to $L\cup K$ and that $\wb \alpha=\sigma(\partial
  D)\notin\Z/2Z$.  Then $K_+$ and $K_-$ are two parallel copies of
  $\partial D$ with framing $+1$ and $-1$ respectively, with the
  orientation of $K_+$ reversed and they are both colored by
  $\frac1{|\Delta^\Spin_+|}$ times a Kirby color of degree $\wb
  \alpha$.  The \spinst\ $\sigma'$ is determined by
  $(c_{\sigma'})_{|L\cup K}=c_\sigma$ and $c_{\sigma'}(K_\pm)=\wb
  \alpha$.
\end{defi}

\begin{prop}
  If two computable link presentations $(L,K,\sigma)$ and $(L,K\cup
  K_+\cup K_-,\sigma')$ are related by a birth move then
  $$F'(L,K,\sigma)=F'(L,K\cup K_+\cup K_-,\sigma')$$
\end{prop}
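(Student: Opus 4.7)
The plan is to localize the computation to a tubular neighborhood of the disc $D$ and show that, together with the prefactor $\frac1{|\Delta^\Spin_+|^2}$, the insertion of the pair $(K_+,K_-)$ acts as the identity under $F'$. The essential compatibility input is that, by the hypothesis on $\sigma'$, the total $\cat$-degree of the strands of $L\cup K$ piercing $D$ equals $\wb\alpha=\sigma(\partial D)$, which matches the degree of the Kirby color on $K_\pm$.

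I proceed by induction on the number $n$ of points of $D\cap(L\cup K)$. For the base case $n=0$, an isotopy of $D$ in $S^3\setminus(L\cup K)$ brings $K_+\cup K_-$ into a small ball split from $L\cup K$, leaving a split unlink of two Kirby-colored unknots of framings $+1$ and $-1$. By the same argument used in the proof of the KI$^\pm$-move proposition above, the insertion of each $K_\pm$ multiplies $F'$ by the scalar $\Delta^\Spin_\pm$. The combined factor $\Delta^\Spin_+\cdot\Delta^\Spin_-=|\Delta^\Spin_+|^2$ is then cancelled exactly by the prefactor $\frac1{|\Delta^\Spin_+|^2}$, yielding the desired equality.

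For the inductive step, given a strand $s$ of $L\cup K$ passing through $D$, I would isotope $\partial D$ across $s$ (reducing the intersection count by one), dragging the pair $(K_+,K_-)$ along together. The crucial claim is that this pair-sliding preserves $F'$. The opposite framings of $K_\pm$ and the reversed orientation of $K_+$ relative to $K_-$ are designed precisely so that the twist and braiding scalars picked up by the two strands during the slide telescope against each other. This is the spin analog of the handle-slide identity appearing in the KII-move proof, and its proof rests on the same Gauss sum mechanism underlying the computation of $\Delta^\Spin_+$ in \eqref{E:Delta+}.

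The main obstacle is this pair-sliding identity of the inductive step. Since the Kirby color $\Omega_\alpha$ decomposes as a nontrivial linear combination of simple modules $V_{\alpha+2k-1}$ with distinct twist eigenvalues $\theta_{\alpha+2k-1}$, the sliding must be checked summand by summand, and the resulting expression must telescope via the Gauss sum identity recalled right after \eqref{E:Delta+}. This is the spin refinement of the classical modular-category identity for Kirby-colored annuli; the non-vanishing of $\Delta^\Spin_\pm$, in contrast to the vanishing of the usual $\Delta_\pm$ in $\cat$, is precisely what makes the normalization $\frac1{|\Delta^\Spin_+|^2}$ work in the spin setting.
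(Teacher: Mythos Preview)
Your induction has a genuine gap, and in fact the base case $n=0$ is false as stated. If $K_+\cup K_-$ is split from $L\cup K$, then in any computation of $F'$ one of $K_+,K_-$ remains a \emph{closed} split unknot colored by some $V_\beta$ with $\beta\in\Cp$, and its contribution through the Reshetikhin--Turaev functor $F$ is the ordinary quantum dimension of $V_\beta$, which is \emph{zero} (this vanishing is precisely why the modified dimension $\qd$ is needed). Hence the right-hand side at $n=0$ is $0$, not $|\Delta^\Spin_+|^2\cdot F'(L,K,\sigma)$. The scalar $\Delta^\Spin_\pm$ in the KI$^\pm$-proposition is computed for a $\pm1$-framed circle that \emph{encircles} a strand; it is not the value on a split unknot. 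In particular, your appeal to ``the same argument as in the KI$^\pm$ proof'' does not apply when nothing passes through the circle. A second, related issue: after sliding $(K_+,K_-)$ off a strand of degree $\wb\beta$, the obstruction $c_{\sigma'}(K_\pm)$ changes by $\wb\beta$, so keeping the Kirby-color degree fixed at $\wb\alpha$ destroys the compatibility condition; the intermediate objects are no longer link presentations, and Proposition~\ref{P:kirbymodr} no longer guarantees independence of the Kirby-color representative. Since the $n=0$ endpoint gives $0$, the pair-sliding identity you posit cannot hold in general.

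The paper's argument sidesteps all of this by \emph{cabling} rather than sliding: put $D$ horizontal, replace the $n$ transverse strands by a single strand colored by $W=V_1'\otimes\cdots\otimes V_n'\in\cat_{\wb\alpha}$, joined to the originals by two coupons colored $\Id_W$. On this single $W$-strand one performs a KI$^+$-move and then a KI$^-$-move; by the KI$^\pm$-proposition (which, via semi-simplicity of $\cat_{\wb\alpha}$, applies to any object of that degree, not just simples), each contributes $\Delta^\Spin_\pm$, and the two induced framing changes on $W$ cancel. Undoing the cabling yields exactly the birth-move picture with the correct prefactor. The point is that $K_\pm$ never become split: they are created as meridians of a strand and stay that way. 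If you want to save an inductive approach, the correct base case is $n=1$, not $n=0$, and the ``pair-sliding'' step must be replaced by the cabling-into-$W$ trick, which is what the paper does in one stroke.
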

\begin{proof}
  We need the use of ribbon graphs with coupons.  We start with the
  computable link presentations $(L,K,\sigma)$ and color the
  components of $L_i$ with Kirby colors.  Up to isotopy, we can assume
  that the disc $D$ is in an horizontal plane and intersects $n$
  vertical strands colored by some modules $V_1,\ldots,V_n$.  These
  $n$ strands represent the identity of $W=V_1'\otimes\cdots\otimes
  V_n'$ where $V'_i$ is $V_i$ or $V_i^*$ according to the orientation
  of the $i^{th}$ strand.  By hypothesis, $W\in\cat_{\wb\alpha}$.
  Then the following modifications does not change the value of the
  invariant by $F'$:
  \begin{enumerate}
  \item Replace a neighborhood of $D$ by a unique vertical strand
    colored by $W$ and connected to the $n$-strands above and ahead
    the disc $D$ by two coupons colored by the morphism $\Id_W$.
  \item Make a KI$^+$-move followed by a KI$^-$-move on the
    $W$-colored strand.  Here we directly color the new components
    $K_\pm$ with a Kirby color divided by $|\Delta_+^\Spin|$. The two
    changes of the framing of the $W$-colored strand compensate.
  \item Remove the $W$-colored strand and the two coupons and replace
    it back with the $n$ original strands.
  \end{enumerate}
  This sequence of modifications lead to $F'(L,K\cup K_+\cup
  K_-,\sigma')$ as describe by a birth move.
\end{proof}
\section{The invariant}\label{S:TheInv}
\subsection{Main theorem}
In this subsection we define the invariant and give some of its properties.
The proofs are postponed until Subsection \ref{SS:ProofOfInv}.

Recall that if $F_KM$ is the framed oriented bundle of $M\setminus K$, a
\spinst\ on $M\setminus K$ is a cohomology class $\sigma$ in
$H^1(F_KM;\C/2\Z)$.  We will say that $\sigma$ is \emph{integral} if $\sigma$
belongs to $\Hom(H_1(F_KM;\Z);\Z/2\Z)$, via the universal coefficients
theorem: $H^1(F_KM;\C/2\Z)\cong \Hom(H_1(F_KM;\Z);\C/2\Z)$.  This means that
$\sigma$ is a natural spin structure on $M\setminus K$ (associated to the
group $\Spin(n)=\Spin(n,\Z/2\Z)$).

\begin{defi}
  A compatible triple $(M,K,\sigma)$ is admissible if $K$ has a
  component colored by $\alpha\in\Cp$ or if $\sigma$ is a non-integral
  \spinst\ on $M\setminus K$.
\end{defi}

\begin{theo}\label{T:main}
 Let $(M,K,\sigma)$ be an admissible triple.  \\
 (1)   If $\sigma$ is not
  integral then there exists a computable surgery presentation
  $(L,K,\sigma)$ of $(M,K,\sigma)$. If $\sigma$ is integral, there
  exists a computable surgery presentation $(L,K',\sigma')$ of an Hopf
  stabilization of $(M,K,\sigma)$.\\
(2) In both cases of part $(1)$ we have, 
  $$\Nr(M,K,\sigma)={\Delta(L)}{F'(L,K,\sigma)}
  \quad\big(\text{respectively }
  \Nr(M,K,\sigma)={\Delta(L)}F'(L,K',\sigma')\big)$$ is an invariant
  of the diffeomorphism class of $(M,K,\sigma)$, where 
  $\Delta(L)=(\Delta^\Spin_+)^{-b_+}(\Delta^\Spin_-)^{-b_-}$ with
  $(b_+,b_-)$ being the signature of the linking matrix of $L$.
\end{theo}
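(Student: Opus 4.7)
I would start with any spin-surgery presentation $(L_0,K,\sigma_0)$ of $(M,K,\sigma)$, which exists by combining the classical Kirby description of $3$-manifolds with the combinatorial classification of Proposition \ref{P:spin-comb}. Since the surjection $H_1(S^3\setminus(L_0\cup K);\Z)\to H_1(M\setminus K;\Z)$ is given by killing the surgery longitudes, the restriction of a non-integral $\sigma$ remains non-integral, so some $J\in L_0\cup K$ satisfies $c_\sigma(J)\notin\Z/2\Z$. A KII handle slide of $J$ over each $L_i$ with $c_\sigma(L_i)\in\Z/2\Z$ replaces its label by $c_\sigma(L_i)-c_\sigma(J)\notin\Z/2\Z$ while leaving $c_\sigma(J)$ intact, and iteration produces a computable presentation. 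In the integral case, admissibility forces $K$ to contain a $\Cp$-colored component. A Hopf stabilization, realized as a banded connected sum with a triple $(S^3,H,\sigma_H)$ with $\sigma_H$ non-integral on a meridian of $H$, then yields an enlarged triple on which $\sigma'$ is non-integral, reducing to the previous case.

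\textbf{Invariance (Part 2).} By the spin version of Kirby's theorem (Theorem \ref{Kirby-spin}), any two spin-surgery presentations of the same compatible triple are related by a finite sequence of the moves introduced in Section \ref{S:moves}. The propositions of that section show move-by-move that the product $\Delta(L)F'(L,K,\sigma)$ is preserved: orientation changes and KII moves leave both $F'$ and the signature $(b_+,b_-)$ of the linking matrix untouched; a KI$^\pm$-move multiplies $F'$ by $\Delta^\Spin_\pm$ and increases $b_\pm$ by one, so the factor $(\Delta^\Spin_\pm)^{-b_\pm}$ in $\Delta(L)$ absorbs the change exactly; and Hopf and birth moves fix $L$ and preserve $F'$. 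Diffeomorphism invariance of $\Nr$ then reduces to move-invariance, and the same Hopf-move calculation shows that in the integral case $\Nr$ is independent of the choice of Hopf stabilization.

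\textbf{Main obstacle.} The delicate point is that a generic Kirby-move sequence between two computable presentations may pass through intermediate presentations that are not computable, where the Kirby colors $\Omega_\alpha$ are not defined and the invariant cannot even be evaluated. I would address this by performing at the outset an auxiliary birth move at a disc $D$ with $\sigma(\partial D)\notin\Z/2\Z$ --- such a disc exists by the non-integrality of $\sigma$ in the first case and by the $\Cp$-colored component after Hopf stabilization in the second. The two Kirby-colored $K$-components of non-integral degree produced by the birth move function as a permanent reservoir of non-integral $c_\sigma$ labels: at any point in the Kirby sequence where some $L_i$ would acquire an integral label, a KI$^+$-stabilization followed by a KII-slide against the reservoir restores a non-integral label, keeping every intermediate presentation computable. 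The auxiliary components are unwound at the end by the inverse birth and KI$^-$ moves; since all the extra moves also preserve $\Delta(L)F'$, this completes the proof that $\Nr(M,K,\sigma)$ is a well-defined diffeomorphism invariant.
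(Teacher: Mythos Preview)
Your overall architecture matches the paper's: Part (1) is proved exactly as in the text (slide a non-integrally labelled component over each bad $L_i$; in the integral case Hopf-stabilize first), and for Part (2) you correctly isolate the real difficulty, namely that the raw Kirby sequence given by Theorem~\ref{Kirby-spin} may leave the computable locus $\Gamma_0$, and that one must carry auxiliary $K$-components as a portable source of non-integral labels.

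The implementation of the ``reservoir'' is where your sketch diverges from the paper and becomes imprecise. Two points:

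\emph{(a) Mechanics of the fix.} The phrase ``a KI$^+$-stabilization followed by a KII-slide against the reservoir'' does not describe a well-defined correction. The paper's maneuver is simply to slide the auxiliary component $J_\alpha$ \emph{over} the offending surgery component $L_i$; this is a single KII move which replaces $c_\sigma(L_i)$ by $c_\sigma(L_i)-c_\sigma(J_\alpha)$ and keeps $c_\sigma(J_\alpha)$ unchanged. No KI-move is involved. You also omit the separate case the paper treats explicitly: when the original Kirby sequence contains a KI$^\pm$ move performed on a $K$-edge of \emph{integral} degree, one cannot perform that move directly (the new surgery component would have integral label); the paper reroutes it through $J_\alpha$ and then slides the $K$-edge on the created component.

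\emph{(b) One degree versus two.} The paper does \emph{not} use the birth-move components $K_\pm$ as the reservoir. It first (if necessary) does a birth move only to guarantee a $\Cp$-colored component $K_1$ of $K$, and then performs \emph{two} Hopf stabilizations on $K_1$ to create meridians $J_\alpha,J_\beta$ with rationally independent generic colors $\alpha,\beta$. The stated reason is that one must be able to alter $c_\sigma(L_i)$ by an arbitrary element of $\Z\alpha+\Z\beta$ while staying inside $\Gamma_0$, both during the main sequence and during the final unwinding where the colors of the $L'_i$ move inside $c_{\sigma'}(L'_i)+\Z\alpha+\Z\beta$. Your birth-move reservoir has $c_\sigma(K_+)=c_\sigma(K_-)=\wb\alpha$, so all corrections lie in $\Z\alpha$ only; you would need to argue carefully that this one-parameter lattice suffices at every step (including the KI$^\pm$ reroute and the final isotopy of the auxiliary components back to their initial position), which you do not do. In addition, the inverse birth move requires $K_+$ and $K_-$ to be in the original parallel configuration; after many slides $K_+$ is a complicated curve and you must, as the paper does for $J_\alpha,J_\beta$, realize its isotopy back to $\partial D$ by a further sequence of KII moves and check that this sequence stays in $\Gamma_0$.

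In short, your strategy is the right one, but the paper's choice of two Hopf-stabilized meridians with independent generic colors is what makes the bookkeeping go through cleanly; your single-degree birth-move reservoir, and the unspecified ``KI$^+$ then KII'' step, leave genuine gaps that would need to be filled.
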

The proof of this theorem will be given in the next section.
We now state some properties of the invariant $\Nr$.

If two admissible triple $(M,K\cup K_\alpha,\sigma)$ and $(M',K'\cup
K_\alpha',\sigma')$ have a distinguished component colored by the same
$\alpha\in\Cp$, then one can consider their banded connected sum
$(M,K\cup K_\alpha,\sigma)\#_\alpha(M',K'\cup
K_\alpha',\sigma')=(M\#M',K\cup (K_\alpha\#K_\alpha'),\sigma \cup
\sigma')$ obtained by removing from $M$ a small 3-ball intersecting
$K_\alpha$, removing from $M'$ a small 3-ball intersecting $K'_\alpha$
and gluing these two manifolds along their diffeomorphic sphere. Then
we have
\begin{prop}[Banded connected sum] Under the above hypothesis we have 
  $$\Nr((M,K\cup K_\alpha,\sigma)\#_\alpha(M',K'\cup
  K_\alpha',\sigma')=\qd(\alpha)^{-1}\Nr(M,K\cup
  K_\alpha,\sigma)\Nr(M',K'\cup K_\alpha',\sigma').$$
\end{prop}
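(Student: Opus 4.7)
The plan is to realize the banded connect sum by taking the disjoint union of surgery presentations of the two factors and joining the distinguished components by a band lying in a separating $2$-sphere. Apply Theorem~\ref{T:main} to obtain computable surgery presentations $(L,\tilde K,\tilde\sigma)$ of $(M,K\cup K_\alpha,\sigma)$ and $(L',\tilde K',\tilde\sigma')$ of $(M',K'\cup K'_\alpha,\sigma')$, performing a preliminary Hopf stabilization if necessary (which by Hopf move invariance of $\Nr$ does not affect either side of the identity). Place the two presentations in disjoint $3$-balls of $S^3=S^3\# S^3$ and join $K_\alpha$ to $K'_\alpha$ by a thin framed band contained in the separating $2$-sphere; denote by $K_\alpha\# K'_\alpha$ the resulting component and by $(L\sqcup L',\,\tilde K\# \tilde K',\,\tilde\sigma\cup\tilde\sigma')$ the new link presentation, where $\tilde K\#\tilde K'$ means $\tilde K\sqcup\tilde K'$ with the pair $\{K_\alpha,K'_\alpha\}$ replaced by $K_\alpha\# K'_\alpha$. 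I claim this is a computable surgery presentation of $(M\# M',\,K\cup(K_\alpha\# K'_\alpha),\,\sigma\cup\sigma')$: the surgered manifold is $M\# M'$, computability is inherited componentwise on $L\sqcup L'$, and the spin structure is well defined because the connecting $3$-ball carries a unique spin structure and the two $c_\sigma$-colorings agree.

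Next I would analyze the two factors in the defining formula $\Nr=\Delta(L)F'(L,\cdot)$ separately. The linking matrix of $L\sqcup L'$ is block diagonal, so its signature is the sum of those of $L$ and $L'$; hence $\Delta(L\sqcup L')=\Delta(L)\Delta(L')$. To compute the other factor, open the component $K_\alpha\# K'_\alpha$ at a point on the connecting band. Because the band lies in a separating $2$-sphere and the rest of the two diagrams lie on opposite sides of it, the resulting $(1,1)$-tangle is the vertical composition of the $(1,1)$-tangle $T$ obtained by opening $K_\alpha$ in $(L,\tilde K)$ with the $(1,1)$-tangle $T'$ obtained by opening $K'_\alpha$ in $(L',\tilde K')$. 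Since $\alpha\in\Cp$, the module $V_\alpha$ is simple, so $F(T)=\brk{T}\Id_{V_\alpha}$ and $F(T')=\brk{T'}\Id_{V_\alpha}$, and their composite evaluates to $\brk{T}\brk{T'}\Id_{V_\alpha}$. Multiplying by $\qd(\alpha)$ yields
\[
F'\bigl(L\sqcup L',\,\tilde K\#\tilde K',\,\tilde\sigma\cup\tilde\sigma'\bigr)=\qd(\alpha)\brk{T}\brk{T'}=\qd(\alpha)^{-1}\,F'(L,\tilde K,\tilde\sigma)\,F'(L',\tilde K',\tilde\sigma').
\]

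Combining this with $\Delta(L\sqcup L')=\Delta(L)\Delta(L')$ gives the stated identity. The main point to be careful about is the factorization of the opened $(1,1)$-tangle as $T'\circ T$: one must choose the band framing to be the product framing inherited from $K_\alpha$ and $K'_\alpha$ so that no extra twist factor $\theta_\alpha^{\pm1}$ appears, after which the factorization follows from the functoriality of $F$ under stacking and the fact that the two pieces of the tangle lie in disjoint half-spaces.
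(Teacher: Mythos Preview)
The paper does not actually include a proof of this proposition. The sentence ``The proofs are postponed until Subsection~\ref{SS:ProofOfInv}'' in practice refers only to Theorem~\ref{T:main}; Subsection~\ref{SS:ProofOfInv} establishes well-definedness of $\Nr$ and nothing more, and the banded connected sum formula (as well as the subsequent theorem on $\Nr^0$) is stated without argument, presumably because it is regarded as a routine consequence of the construction.

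Your argument is the standard one and is correct. The two essential points --- that $\Delta(L\sqcup L')=\Delta(L)\Delta(L')$ from the block-diagonal linking matrix, and that opening $K_\alpha\# K'_\alpha$ on the band yields the composite $(1,1)$-tangle $T'\circ T$ with $F(T'\circ T)=\brk T\brk{T'}\Id_{V_\alpha}$ by simplicity of $V_\alpha$ --- are exactly how such identities are proved in the relative $G$-modular framework (compare the analogous statement in \cite{CGP1}). One minor simplification you could make: since both factors already carry a component colored by $\alpha\in\Cp$, the Hopf stabilization in Theorem~\ref{T:main}(1), if needed, is performed on $K_\alpha$ (respectively $K'_\alpha$) itself; after the band sum the added meridian is still a meridian of $K_\alpha\# K'_\alpha$, so the stabilized presentation of the connected sum is visibly a Hopf stabilization of the connected sum and no separate compatibility check is required. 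Your remark about choosing the product framing on the band so that no spurious $\theta_\alpha^{\pm1}$ appears is well taken and is the only place where care is genuinely needed.
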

For the ordinary connected sum of compatible triples, remark that the
result of the connected sum of an admissible triple $(M,K,\sigma)$
with any compatible $(M',K',\sigma')$ is admissible.  This allows to
compute a secondary invariant $\Nr^0$ for non admissible triples:
\begin{theo}[$\Nr^0$ and connected sum]
  There exists a unique $\C$-valued invariant $\Nr^0$ defined on
  any compatible triple (not necessary admissible) which is zero on
  admissible triples and such that for all admissible triple
  $(M,K,\sigma)$ and any not necessary admissible triple
  $(M',K',\sigma')$,
  $$\Nr((M,K,\sigma)\#(M',K',\sigma'))=\Nr(M,K,\sigma)\Nr^0(M',K',\sigma').$$
\end{theo}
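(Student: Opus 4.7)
The plan is to build $\Nr^0(T')$ directly from a surgery presentation of $T'$ and to identify it as the missing multiplicative factor in $\Nr(T\#T')$. The decisive observation is that $T\#T'$ admits a surgery presentation obtained by drawing the two pieces in disjoint $3$-balls of $S^3$, and the Reshetikhin--Turaev functor is multiplicative on such disjoint pieces.

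First I would establish a key factorization. Let $T=(M,K,\sigma)$ be admissible and take, by Theorem \ref{T:main}, a computable surgery presentation $(L,K,\sigma)$ of $T$, possibly after a Hopf stabilization (which does not change $\Nr$). Let $T'=(M',K',\sigma')$ be any compatible triple and let $(L',K',\sigma')$ be a surgery presentation in which the components of $L'$ are coloured by Kirby colours of the degrees prescribed by $\sigma'$. Placing $L\sqcup L'$ and $K\sqcup K'$ in disjoint $3$-balls of $S^3$ gives a surgery presentation of $T\#T'$. Opening a $\Cp$-coloured strand furnished by the computable side $(L,K,\sigma)$ and invoking the multiplicativity $F(A\sqcup B)=F(A)\cdot F(B)$ of the RT functor on disjoint closed parts yields
\[
F'(L\sqcup L',\,K\sqcup K',\,\sigma\cup\sigma')= F'(L,K,\sigma)\cdot F(L'\cup K',\sigma'),
\]
where $F$ denotes the unrenormalized RT invariant of the closed $\cat$-coloured link $L'\cup K'$. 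Since the linking matrix of a disjoint union is block diagonal, $\Delta(L\sqcup L')=\Delta(L)\Delta(L')$, and hence
\[
\Nr(T\#T')=\Nr(T)\cdot\Delta(L')\,F(L'\cup K',\sigma').
\]

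I would then define $\Nr^0(T'):=\Delta(L')\,F(L'\cup K',\sigma')$ and check that it depends only on $T'$. Fix any admissible triple $T_0$ with $\Nr(T_0)\neq 0$; the simplest choice is $T_0=(S^3,K_\alpha,\sigma_0)$ with $K_\alpha$ an unknot coloured by a generic $\alpha\in\Cp$, so that $\Nr(T_0)=\qd(\alpha)\neq 0$. The factorization applied to $T_0$ gives $\Nr^0(T')=\Nr(T_0\#T')/\Nr(T_0)$, and the right-hand side depends only on the diffeomorphism class of $T'$ by Theorem \ref{T:main}. Hence $\Nr^0(T')$ is independent of the surgery presentation chosen for $T'$, and is therefore a well-defined invariant of the compatible triple. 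The same factorization, applied now to an arbitrary admissible $T$, yields $\Nr(T\#T')=\Nr(T)\Nr^0(T')$ in full generality.

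The vanishing $\Nr^0(T')=0$ on admissible triples follows from a quantum-dimension computation. For $\beta\in\Cp$ the $r$-dimensional simple module $V_\beta$ satisfies $\dim_q V_\beta=\sum_{k=0}^{r-1}q^{\beta+r-1-2k}=0$ because $q^{-2r}=1$. Consequently $F$ of any closed $\cat$-coloured link containing a $V_\beta$-coloured component vanishes: opening that component produces a scalar endomorphism whose quantum trace is a multiple of $\dim_q V_\beta$. When $T'$ is admissible, either $K'$ already carries a $\Cp$-coloured component or some $L'_i$ has $c_{\sigma'}(L'_i)\notin\Z/2\Z$ and its Kirby colour expands as a combination of $\Cp$-coloured simples. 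In either case $F(L'\cup K',\sigma')=0$, so $\Nr^0(T')=0$. Uniqueness of $\Nr^0$ is then forced: on admissible triples it must vanish, and on non-admissible triples the relation $\Nr(T_0\#T')=\Nr(T_0)\Nr^0(T')$ with $\Nr(T_0)\neq 0$ pins it down.

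The main obstacle is the key factorization itself: one has to arrange the surgery presentation of $T\#T'$ as a disjoint union in $S^3$ and perform the opening of the $\Cp$-coloured strand entirely on the $T$-side, so that the $T'$-part contributes only the scalar factor $F(L'\cup K',\sigma')$. Once this is set up, topological invariance of $\Nr$ transfers directly to the candidate $\Nr^0$, and the remaining assertions reduce to the quantum-dimension calculation.
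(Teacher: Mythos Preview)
The paper itself does not spell out a proof of this theorem (Subsection \ref{SS:ProofOfInv} only treats Theorem \ref{T:main}), so your proposal is being judged on its own merits. Your overall architecture---define $\Nr^0(T')$ so that a disjoint-presentation factorization holds, then pin it down via a test triple $T_0$ with $\Nr(T_0)\neq 0$---is the right idea, and your quantum-dimension argument for the vanishing on admissible $T'$ is correct. However, there is a genuine gap in the central step.

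You write: ``Let $(L',K',\sigma')$ be a surgery presentation in which the components of $L'$ are coloured by Kirby colours of the degrees prescribed by $\sigma'$,'' and then set $\Nr^0(T')=\Delta(L')\,F(L'\cup K',\sigma')$. But in this paper Kirby colours $\Omega_\alpha$ are only defined for $\alpha\in\C\setminus\Z$ (see the definition just before Equation \eqref{E:Delta+}). Precisely when $T'$ is \emph{not} admissible---the case of interest---$\sigma'$ is integral and every $c_{\sigma'}(L'_i)\in\Z/2\Z$, so none of the required Kirby colours exist. Consequently neither $F(L'\cup K',\sigma')$ nor your factorization $F'(L\sqcup L',K\sqcup K',\sigma\cup\sigma')=F'(L,K,\sigma)\cdot F(L'\cup K',\sigma')$ is defined. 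Equivalently, the disjoint-union presentation $(L\sqcup L',K\sqcup K',\sigma\cup\sigma')$ is \emph{not} computable in the sense of Proposition \ref{P:kirbymodr}, so it cannot be fed directly into the definition of $\Nr$.

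The repair is not hard but it is not what you wrote. One must first make the presentation of $T\#T'$ computable, e.g.\ by sliding a $\Cp$-coloured strand from the $T$-side (say the unknot $u_\alpha$ when $T_0=(S^3,u_\alpha,\sigma_0)$) over each integrally coloured $L'_i$ via KII moves, as in the proof of Theorem \ref{T:main}(1). After these slides the two pieces are linked, and the clean tensor factorization $F(A\sqcup B)=F(A)F(B)$ is no longer available; one has to argue differently that the resulting number depends only on $T'$ (for instance by showing, via associativity of $\#$ and a second application of the same construction, that $\Nr(T_0\#T')\Nr(T_1)=\Nr(T_1\#T')\Nr(T_0)$ for any two test triples, or by relating the computation to the banded connected sum formula). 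Your text derives the well-definedness of $\Nr^0(T')=\Nr(T_0\#T')/\Nr(T_0)$ \emph{from} the factorization rather than establishing it independently, so as written the argument is circular at exactly the point where $T'$ is non-admissible.
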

\subsection{Proof of the invariance}\label{SS:ProofOfInv}
The moves of Section \ref{S:moves} can be considered as the edges a graph
$\Gamma$ whose vertices are link presentations.  Let $\Gamma_0$ be the
subgraph of $\Gamma$ consisting of computable presentations.  Then the propositions
of Section \ref{S:moves} imply that
\begin{prop}
  The function $(L,K,\sigma)\mapsto{\Delta(L)}{F'(L,K,\sigma)}$ is
  constant on the connected components of $\Gamma_0$.
\end{prop}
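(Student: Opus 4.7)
The plan is to verify invariance along each edge of $\Gamma_0$ separately, and then the result follows because a connected component of any graph is path-connected. Since $\Gamma_0$ consists of computable presentations, the hypotheses of each of the five invariance propositions of Section \ref{S:moves} are met along any edge; those propositions describe how $F'(L,K,\sigma)$ transforms under each move, and the task reduces to checking that the change in the prefactor $\Delta(L)=(\Delta^\Spin_+)^{-b_+}(\Delta^\Spin_-)^{-b_-}$ exactly compensates.

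Three of the five moves are immediate. The Hopf move and the Birth move leave the surgery link $L$ unchanged, while $F'$ is invariant by the corresponding propositions, so both factors are preserved. The Orientation move preserves the underlying unoriented framed link; reversing the orientation of a component conjugates the linking matrix by a diagonal $\pm 1$ matrix, which preserves its signature $(b_+,b_-)$ and hence $\Delta(L)$. The KII handle-slide move acts on the linking matrix of $L$ by a symmetric congruence transformation (an elementary row operation together with the corresponding column operation), so again $(b_+,b_-)$ and $\Delta(L)$ are preserved, matching the invariance of $F'$ under KII established in Section \ref{S:moves}.

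The only real computation is for the KI$^\pm$-move, which I expect to be the main obstacle since both $F'$ and $\Delta(L)$ change. By the relevant proposition, adding the $\pm 1$-framed unknot $o$ multiplies $F'$ by $\Delta^\Spin_{\pm}$, so it suffices to prove that the signature of the linking matrix acquires exactly one extra eigenvalue of sign $\pm$, i.e.\ $b_\pm$ increases by one and $b_\mp$ is unchanged; then $\Delta(L\cup o)/\Delta(L)=(\Delta^\Spin_{\pm})^{-1}$ and the cancellation is exact. I would verify this by a direct Schur-complement calculation: if $J$ is a component of $K$, the new component $o$ is unlinked from every component of $L$ and the new linking matrix is $B\oplus(\pm 1)$; if $J$ is a component of $L$, the new matrix differs from $B$ by a shift of $\pm 1$ in the $(J,J)$ entry (coming from the full twist on $J$), a single off-diagonal entry $\mp 1$ between $o$ and $J$, and the new diagonal entry $\pm 1$ for $o$, and pivoting on this last entry returns precisely $B$ as the Schur complement, so only a single eigenvalue of sign $\pm$ is appended. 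The sign bookkeeping in the case $J\in L$ is the delicate step; once that is verified, assembling the five cases yields the proposition.
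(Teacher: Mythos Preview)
Your proposal is correct and follows the same approach as the paper's proof, which is essentially the one-line observation that all moves preserve both factors except the KI$^\pm$ moves, where the change in $F'$ by $\Delta^\Spin_\pm$ is exactly cancelled because a positive KI$^\pm$ move increases $b_\pm$ by one. Your Schur-complement verification of this signature change and your case analysis for the other moves spell out details the paper leaves implicit, but the overall strategy is identical.
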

\begin{proof}
  The factors ${\Delta(L)}$ and $F'(L,K,\sigma)$ are invariant by all
  the moves except for KI$^\pm$ moves for which they exactly
  compensates.  Indeed, a positive KI$^\pm$ move adds exactly $1$ to
  the signature integer $b_\pm$.
\end{proof}

\begin{proof}[Proof Theorem \ref{T:main} Part (1)]
 Here we prove the existence of computable presentations.   Let $(M,K,\sigma)$ be admissible as in Theorem \ref{T:main} Part (1).  If
  $\sigma$ is integral, then $K$ has a component colored by an element
  of $r\Z\subset\Cp$.  Then we replace $(M,K,\sigma)$ by an Hopf
  stabilization on such an edge so that the added meridian has a color
  in $\Cp\setminus\Z$.  

  Now we are reduced to the case where $\sigma$ has a non integral
  value.  Consider any surgery presentation $(L,K,\sigma)$ of
  $(M,K,\sigma)$ and consider a component $J$ of $L\cup K$ with
  $c_\sigma(J)\notin\Z/2\Z$. 
   If $c_\sigma(L_i)\in\Z/2\Z$ for some component $L_i$ of $L$ 
  then we do a KII move by sliding $J$
  on $L_i$.  In the resulting link $L'$, we have
  $c_\sigma(L_i')\notin\Z/2\Z$
   where $L_i'$ corresponds to $L_i$
   and the other values of $c_\sigma$ are
  unchanged.  Repeating this for all components of $L$ if necessary
  leads to a computable presentation of $(M,K,\sigma)$.
  \renewcommand{\qedsymbol}{\fbox{\ref{T:main}(1)}}
\end{proof}
\renewcommand{\qedsymbol}{\fbox{\thetheo}}

\begin{rem}
  It is clear that if $(M_1,K_1,\sigma_1)$ and $(M_2,K_2,\sigma_2)$
  are two Hopf stabilizations of $(M,K,\sigma)$, then there exists
  $(M_{12},K_{12},\sigma_{12})$ which is both a Hopf stabilization of
  $(M_1,K_1,\sigma_1)$ and $(M_2,K_2,\sigma_2)$.  
\end{rem}
With this remark, Theorem \ref{T:main} Part (2) follows from the following
proposition whose proof is essentially a reformulation of \cite{CGP1}:
\begin{prop}\label{P:computableconnected}
  Any two computable link presentations of an admissible triple
  $(M,K,\sigma)$ correspond to vertices of the same connected
  component of $\Gamma_0$.
\end{prop}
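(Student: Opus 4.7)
The plan is to reduce to the classical Kirby calculus and then massage the resulting Kirby sequence to stay within $\Gamma_0$, following the strategy of the analogous result in the cohomological setting of \cite{CGP1}.

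Given two computable link presentations $P_0$ and $P_n$ of the admissible triple $(M,K,\sigma)$, the spin Kirby theorem (Theorem \ref{Kirby-spin}) furnishes a path $P_0 = Q_0, Q_1, \ldots, Q_n = P_n$ in $\Gamma$ consisting of orientation, KI$^\pm$, KII, Hopf, and birth moves. The potential obstruction is that intermediate $Q_i$ may be non-computable, i.e., may have some surgery component $L_j$ with $c_\sigma(L_j) \in \Z/2\Z$ while no $\cat$-colored component of $K$ sits in $\Cp$.

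The crucial step is to guarantee the existence, throughout the path, of a \emph{reference} component carrying a non-integral $\C/2\Z$-coloring. By admissibility, either $K$ already contains a component colored in $\Cp$, which serves as the reference automatically, or $\sigma$ is non-integral. In the latter case I would prefix the path by a birth move along a curve $\gamma \subset M \setminus K$ with $\sigma(\gamma) \notin \Z/2\Z$: this produces a pair $K_\pm$ carrying non-integral Kirby colors that persists until the inverse birth move is applied at the end. Since any two Hopf/birth stabilizations admit a common further stabilization, such prefix/suffix insertions do not change the endpoints up to further moves in $\Gamma_0$.

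With such a reference in place, whenever $Q_i$ has a surgery component $L_j$ with $c_\sigma(L_j) \in \Z/2\Z$, I would insert a KII handle slide of $L_j$ over the reference: the formula $c_{\sigma'}(L_j') = c_\sigma(L_j) - c_\sigma(\text{ref})$ ensures $L_j'$ is non-integral, making the local presentation computable, and the slide can be undone after the offending $Q_i$ is traversed. Similarly, a KI$^\pm$ move adds a meridian $o$ with $c_{\sigma'}(o) = c_\sigma(J) + \overline 1$, so first sliding $J$ over the reference (if $c_\sigma(J) \in \Z/2\Z$) ensures that $o$ also acquires a non-integral coloring. The main obstacle will be the combinatorial bookkeeping needed to choreograph these auxiliary slides consistently along the whole sequence, so that no KI$^\pm$, KII, or orientation move accidentally reintroduces integrality on some other surgery component. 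This step is essentially a reformulation of the argument given in \cite{CGP1} for the cohomological invariant, with the spin Kirby theorem replacing its cohomological counterpart and the $\C/2\Z$-valued coloring $c_\sigma$ playing the role of the cohomology class.
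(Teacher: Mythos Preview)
Your overall strategy is the paper's: invoke the spin Kirby theorem (Theorem~\ref{Kirby-spin}) to connect the two computable presentations by a sequence of moves (note: only orientation, KI$^\pm$ and KII moves, not Hopf or birth moves, are furnished by that theorem), and then repair non-computable intermediate steps using a non-integrally colored reference component. But the repair mechanism you describe has the handle slide running the wrong way, and this is not a harmless slip.

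In the KII move one slides a component $J\in L\cup K$ \emph{over} a surgery component $L_i$; one cannot slide over a $K$-component, and the only coloring that changes is that of the component being slid \emph{over}: $c_{\sigma'}(L_i')=c_\sigma(L_i)-c_\sigma(J)$. Your reference is always a $K$-component, so ``slide $L_j$ over the reference'' is not a legal move; the formula you quote is in fact the formula for sliding the reference over $L_j$, which is what the paper actually does. More seriously, your KI$^\pm$ repair fails outright: when $J$ is a $K$-component of integral degree, no KII slide can alter $c_\sigma(J)$, because that value is pinned by the $\cat$-color of $J$ (compatibility). The paper's fix is different: perform the KI$^\pm$ on the reference $J_\alpha$ instead, so the new surgery unknot $o$ carries $c_{\sigma'}(o)=c_\sigma(J_\alpha)+\overline 1\notin\Z/2\Z$, and then slide the original $K$-strand over $o$ to transfer the twist.

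There is also a missing endgame. The paper does not ``undo each slide after the offending $Q_i$''---that would immediately restore the integral color you just removed. Instead it introduces \emph{two} Hopf meridians $J_\alpha,J_\beta$ on a $\Cp$-colored component of $K$ (after a preliminary birth move if necessary), with generic rationally independent colors, and lets the auxiliary slides accumulate through the whole Kirby sequence. At the end $J_\alpha,J_\beta$ are still meridians of $K_1'$ in $S^3(L')\setminus K'$, so an ambient isotopy there gives a further KII sequence carrying them back to small meridians; during this clean-up every surgery color lies in $c_{\sigma'}(L_i')+\Z\alpha+\Z\beta$, which by genericity never meets $\Z/2\Z$. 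Your sketch does not explain how the reference is returned to its original position through computable presentations.
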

\begin{proof}
  The key point is Kirby's theorem and its refinement:
  \begin{theo}[see Theorem 5.2 of \cite{CGP1}]\label{T:CGP5.2}
    Let $L\cup K$ and $L'\cup K'$ be two link in $S^3$ and
    $f:(S^3(L),K)\to(S^3(L'),K')$ be an orientation preserving
    diffeomorphism.  Then there is a sequence of orientation moves,
    KI$^\pm$ moves and KII moves transforming $(L,K)$ to $(L',K')$
    whose associated canonical diffeomorphism is $f$.
  \end{theo}
  We can use this theorem in the case a diffeomorphism
  $g:(M,K,\sigma)\to(M',K',\sigma')$ respects the \spinst.  Then if
  $(L,K,\sigma)$ is a presentation of $(M,K,\sigma)$ and
  $(L',K',\sigma')$ is a presentation of $(M',K',\sigma')$, $g$
  induces a diffeomorphism $f$ as in Theorem~\ref{T:CGP5.2} that will send the
  \spinst\ of $S^3(L)\setminus K$ to the \spinst\ of $S^3(L')\setminus
  K'$.  As a consequence, there exists a sequence of orientation
  moves, KI$^\pm$ moves and KII moves connecting the link
  presentations $(L,K,\sigma)$ and $(L',K',\sigma')$ (with their
  \spinst).  As a corollary, we have
  \begin{theo}\label{Kirby-spin}
    $(L,K,\sigma)$ and $(L',K',\sigma')$ are link presentations of
    positively diffeomorphic compatible triples if and only if they
    are connected by a finite sequence of orientation moves, KI$^\pm$
    moves and KII moves.
  \end{theo}
  Let $\wb L=(L,K,\sigma)$ and $\wb L'=(L',K',\sigma')$ be two computable link presentations  of the same admissible
  triple.  By the Theorem \ref{Kirby-spin}, there is a finite sequence of
  orientation moves, KI$^\pm$ moves and KII moves connecting $\wb L$ and $\wb L'$.

  To prove Theorem \ref{T:main} Part (2), we first reduce to the case when $K$ has a component colored by an
  element of $\Cp$: If it is not the case, then there is a component
  $L_i$ of $L$ with $c_\sigma(L_i)\notin\Z/2\Z$.  We consider a small
  disc in $S^3$ that intersects $L_i$ once.  We do a birth move on this
  disc that creates two new components.  Then we can perform the
  analog of the finite sequence of orientation moves, KI$^\pm$ moves
  and KII moves on this link presentation and this leads to a link
  presentation related to $\wb L'$ by a death move (the two created
  components stay parallel and unknotted in $S^3$ during all this
  process).  Hence up to replacing $\wb L$ and $\wb L'$ by these
  computable presentations obtained from them by a birth move, we can
  assume that $K$ has a component (say $K_1$) colored by a element of
  $\Cp$.

  Second, we do two Hopf stabilizations on $K_1$, creating two
  meridians $J_\alpha$ and $J_\beta$ colored with some generic color
  $\alpha,\beta\in\Cp$ (by generic, we mean that $\alpha$ and $\beta$
  are rationally independent with all the values of $\sigma$ or
  $c_\sigma$).  Now sliding $J_\alpha$ and $J_\beta$ on the surgery
  components allows us to change arbitrarily the $\C/2\Z$-coloring of
  surgery components by adding any element of $\Z\alpha+\Z\beta$
  through computable presentation (this is the reason of the use of
  two meridians with independent colors).

  Now we start performing the analog of the finite sequence of
  orientation moves, KI$^\pm$ moves and KII moves ignoring $J_\alpha$
  and $J_\beta$ but during the sequence,
  \begin{itemize}
  \item if a KII move which is a sliding on $L_i$ generates an integral
    value of $c_\sigma(L_i')$, we first change the color of $L_i$ by
    an element of $\Z\alpha+\Z\beta$ so that the KII move does not
    cause the appearance of a color in $\Z/2\Z$ ;
  \item if a KI$^\pm$ move is performed around an edge of $K$ of integral
    degree, we do it instead on $J_\alpha$ and then slide the edge of
    $K$ on the created component.  
  \end{itemize}
  Doing this we follow a path in $\Gamma_0$ leading to $\wb L'$ union
  two knots $J_\alpha$ and $J_\beta$ that may be linked with other
  components of $L'\cup K'$.  But in $S^3(L')\setminus K'$, $J_\alpha$
  and $J_\beta$ are meridians of a component (say $K'_1$) of $K'$.
  Thus there exists an isotopy in $S^3(L')\setminus K'$ moving
  $J_\alpha$ and $J_\beta$ to a small neighborhood of $K'_1$.  To this
  isotopy corresponds a sequence of KII moves where only $J_\alpha$ and
  $J_\beta$ are sliding on the surgery components.  This sequence leads
  to a double Hopf stabilization of $\wb L'$.  Furthermore, as a color
  of a surgery component $L_i'$ during these moves belongs to
  $c_{\sigma'}+\alpha\Z+\beta\Z$ modulo 2 it is never an integer and
  all the sequence is in $\Gamma_0$.
  \renewcommand{\qedsymbol}{\fbox{\ref{P:computableconnected}}}
\end{proof}
\renewcommand{\qedsymbol}{\fbox{\thetheo}}

\end{document}